\documentclass[10pt]{amsart}
\usepackage{amssymb,amsmath,amsfonts,amscd}

\theoremstyle{plain}
\newtheorem{theorem}{Theorem}[section]
\newtheorem{proposition}[theorem]{Proposition}
\newtheorem{definition}[theorem]{Definition}

\newtheorem{corollary}[theorem]{Corollary}
\newtheorem{remark}[theorem]{Remark}
\newtheorem{lemma}[theorem]{Lemma}

\def\eps{\epsilon}

\usepackage{pdfsync}


\newcommand{\rn}[1]{{\mathbb R}^{#1}}
\newcommand{\R}{\mathbb R}

\newcommand{\he}[1]{{\mathbb H}^{#1}}


\newcommand{\scal}[2]{\langle {#1} , {#2}\rangle}

\newcommand{\Scal}[2]{\langle {#1} \vert {#2}\rangle}
\newcommand{\scalp}[3]{\langle {#1} , {#2}\rangle_{#3}}

\newcommand{\ccheck}{{\vphantom i}^{\mathrm v}\!\,}
\newcommand{\mc}{\mathcal }

\newcommand{\N}{\mathbb N}

\usepackage[usenames]{color}

\begin{document}


\title[
] 
{$L^1$-Poincar\'e and Sobolev inequalities for differential forms 
  {in} Euclidean spaces
  \footnote{To appear in Science China Mathematics}
  }

\author[Annalisa Baldi, Bruno Franchi, Pierre Pansu]{
Annalisa Baldi\\
Bruno Franchi\\ Pierre Pansu
}

\keywords{Differential forms, Sobolev-Poincr\'e inequalities,  homotopy formula}

\subjclass{58A10,  26D15,  
46E35}

\maketitle

%

%

\section{Introduction}


The simplest form of Poincar\'e inequality in an open set $B\subset \rn n$ can be stated as follows: if $1\leq p<n$ 
there exists $C(B,p)>0$ such that for any
(say) smooth function $u$ on $\rn n$ there exists a constant $c_u$ such that 
$$
\|u-c_u\|_{L^q(B)} \leq C(n,p)\,\|\nabla u\|_{L^p(B)}   
$$
provided  $\frac{1}{p}-\frac{1}{q}=\frac{1}{n}$. Sobolev inequality is very similar, but in that case we
are dealing with compactly supported functions, so that the constant $c_u$   {can} be dropped. It is well
known  (Federer \& Fleming theorem, \cite{federer_fleming}) that for $p=1$ Sobolev inequality is equivalent to
the {classical} isoperimetric inequality (whereas Poincar\'e inequality correspond to   {classical} {\sl relative}
isoperimetric inequality).

Let us restrict for a while to the case $B=\rn n$, to investigate generalizations of these inequalities to differential forms.
It is easy to see that Sobolev and Poincar\'e inequalities are equivalent to the following problem:
we ask whether, given a closed differential $1$-form $\omega$ in $L^p(\R^n)$,  there exists   {a} $0$-form $\phi$ in $L^q(\R^n)$ with 
$\frac{1}{p}-\frac{1}{q}=\frac{1}{n}$ such   {that}
\begin{equation}\label{intro 1}
d\phi=\omega\qquad\mbox{and}\qquad \|\phi\|_q\leq C(n,p,h)\,\|\omega\|_p.
\end{equation}
Clearly, this problem can be formulated in general for $h$-forms $\omega$ in $L^p(\R^n)$ and we are lead to
look for $(h-1)$-forms $\phi$ in $L^q(\R^n)$ such that \eqref{intro 1} holds. This is the problem we have
in mind when we   {speak} about  Poincar\'e inequality for differential forms. 
When we   {speak} about Sobolev inequality, we have in mind compactly supported differential forms.

The case $p>1$ has been fully understood on bounded convex sets by Iwaniec \& Lutoborsky (\cite{IL}). On the other hand, 
in the   {full} space $\rn n$ an easy proof consists in putting $\phi=d^*\Delta^{-1}\omega$. Here, $\Delta^{-1}$ denotes the inverse of the Hodge Laplacian $\Delta=d^*d+dd^*$
and $d^*$ is the formal $L^2$-adjoint of $d$. The operator $d^*\Delta^{-1}$ is given by convolution with a homogeneous kernel of type $1$ in the terminology of \cite{folland_lectures} and \cite{folland_stein}, hence it is bounded from $L^p$ to $L^q$ if $p>1$. 
Unfortunately, this argument does not suffice for $p=1$ since, by \cite{folland_stein}, Theorem 6.10,  $d^*\Delta^{-1}$ maps $L^1$ only into the weak Marcinkiewicz space $L^{n/(n-1), \infty}$. Upgrading from $L^{n/(n-1), \infty}$ to $L^{n/(n-1)}$ is possible for functions
(see \cite{LN}, \cite{FGaW}, \cite{FLW_grenoble}), but the trick does not seem to generalize to differential forms.

Since   {the} case $p=1$ is the most relevant from a geometric point of view, we focus on that case. First of all,
we notice that Poincar\'e inequality with $p=1$ fails in   {top} degree unless a global integral inequality is satisfied.
Indeed for $h=n$ forms belonging to $L^1$ and with nonvanishing integral cannot be differentials of $
L^{n/(n-1)}$ forms, see \cite{Tripaldi}. In arbitrary degree, a   {similar} integral obstruction takes the form
 $\int\omega\wedge\beta=0$ for every constant coefficient form $\beta$ of complementary degree. 
 Therefore we introduce the subspace $L_0^1$ of $L^1$-differential forms satisfying these conditions.
However, in degree $n$   {assuming that} the integral constraint   {is satisfied} does   {not} suffice,   {as we shall see in Section \ref{parabolicity}}. On the other hand, for instance it follows from
\cite{BB2007} that Poincar\'e inequality holds in degree $n-1$.
We refer the reader to \cite{BFP3} for a discussion, in particular in connection with van Schaftingen's \cite{vS2014}) and Lanzani \& Stein's \cite{LS} results.

We can state our main results. 
We have:
\begin{theorem}[Global Poincar\'e and Sobolev inequalities]
 Let $h=1,\ldots,n-1$ and set $q=n/(n-1)$. For every closed $h$-form $\alpha\in L_0^1(\R^n)$, there exists an $(h-1)$-form $\phi\in L^{q}(\R^n)$, such that 
$$
d\phi=\alpha\qquad\mbox{and}\qquad \|\phi\|_{q}\leq C\,\|\alpha\|_{1}.
$$
Furthermore,  if $\alpha$ is compactly supported, so is $\phi$.

\end{theorem}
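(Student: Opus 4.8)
The plan is to reduce the $L^1$ estimate to a combination of two ingredients: a homotopy formula that produces a primitive with a good kernel, and the Gagliardo--Nirenberg--Sobolev phenomenon that upgrades the weak $L^{n/(n-1)}$ bound coming from that kernel to a strong $L^{n/(n-1)}$ bound, using crucially that the datum $\alpha$ is a \emph{closed} form lying in the closed subspace $L_0^1$. First I would invoke the classical potential-theoretic primitive: for a closed $h$-form $\alpha\in L^1(\R^n)$ write $\phi_0 = d^*\Delta^{-1}\alpha$, so that $d\phi_0 = \alpha$ (here closedness of $\alpha$ is used, since $dd^*\Delta^{-1}\alpha = \alpha - d^*d\Delta^{-1}\alpha = \alpha - d^*\Delta^{-1}d\alpha = \alpha$). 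As recalled in the introduction, $d^*\Delta^{-1}$ is convolution with a kernel homogeneous of degree $-(n-1)$ (type $1$ in the Folland--Stein sense), so a priori we only get $\phi_0\in L^{n/(n-1),\infty}$ with $\|\phi_0\|_{L^{n/(n-1),\infty}}\le C\|\alpha\|_1$.

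The key step is to promote this to a genuine $L^{n/(n-1)}$ bound. I would do this by a smoothing/approximation argument in the spirit of the scalar case (\cite{LN}, \cite{FGaW}, \cite{FLW_grenoble}): mollify $\alpha$ by $\alpha_\eps = \alpha * \rho_\eps$, which is still closed and still satisfies the integral constraints defining $L_0^1$ (convolution commutes with $d$ and with wedging against constant forms under the integral), with $\|\alpha_\eps\|_1\le\|\alpha\|_1$ and $\alpha_\eps\to\alpha$ in $L^1$. For the smooth datum one has the full elliptic regularity, and the scalar-type argument — decomposing $\phi_0$ componentwise and estimating the truncated kernel against $\|\nabla\alpha_\eps\|_1$-type quantities, or equivalently running the Sobolev inequality for the components of the primitive together with the pointwise bound $|\nabla\phi_0|\lesssim$ (kernel of type $0$)$*\,|\alpha|$ — yields $\|\phi_{0,\eps}\|_{n/(n-1)}\le C\|\alpha_\eps\|_1\le C\|\alpha\|_1$ with $C$ independent of $\eps$. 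Since the $\phi_{0,\eps}$ are uniformly bounded in the reflexive space $L^{n/(n-1)}$, a weak limit $\phi$ exists, still satisfies $d\phi=\alpha$ in the distributional sense, and obeys $\|\phi\|_{n/(n-1)}\le C\|\alpha\|_1$ by lower semicontinuity of the norm. This proves the Poincaré (non-compactly-supported) half of the theorem.

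For the compactly supported refinement one cannot use $d^*\Delta^{-1}$ directly, since its output need not have compact support; here the obstruction classes in $L_0^1$ are exactly what is needed. I would argue as follows: let $\alpha$ be closed, compactly supported, say $\supp\alpha\subset B_R$, and satisfying $\int\alpha\wedge\beta=0$ for every constant-coefficient $\beta$ of complementary degree. Take the primitive $\phi$ just constructed; outside $B_R$ it is closed, and by the decay of the kernel it lies in $L^{n/(n-1)}$ near infinity. The integral constraints force the relevant periods of $\alpha$ over spheres $\partial B_r$ to vanish, so $\phi$ restricted to $\R^n\setminus B_R$ is exact there; writing $\phi = \phi - d\psi$ with a suitable $\psi$ supported outside $B_{R}$, chosen via a Bogovskii-type or cone-construction (homotopy) operator on the exterior domain with the correct mapping properties into $L^{n/(n-1)}$, produces a primitive supported in a ball. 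The main obstacle I anticipate is precisely this last step — making the passage from ``weak'' to ``strong'' $L^{n/(n-1)}$ uniform in $\eps$ for the \emph{vector-valued} (differential-form) primitive, and dovetailing it with a compactly-supported correction that does not destroy the norm bound; the componentwise scalar trick is not obviously stable under the exterior-derivative coupling, and one must be careful that the correction $\psi$ is itself controlled in $L^{n/(n-1)}$ by $\|\alpha\|_1$.
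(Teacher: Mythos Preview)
Your proposal has a genuine gap at the central step, and it is precisely the gap the paper warns about in its introduction. You correctly identify $\phi_0=d^*\Delta^{-1}\alpha$ as the candidate primitive and correctly observe that the kernel of type $1$ only gives $\phi_0\in L^{n/(n-1),\infty}$. The problem is your ``upgrading'' argument. You propose to mollify and then either control a truncated kernel by $\|\nabla\alpha_\eps\|_1$, or run the scalar Gagliardo--Nirenberg inequality on the components of $\phi_0$ using a pointwise bound $|\nabla\phi_0|\lesssim (\text{kernel of type }0)*|\alpha|$. Neither works. The quantity $\|\nabla\alpha_\eps\|_1$ is of order $\eps^{-1}\|\alpha\|_1$ and gives no uniform bound. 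And a kernel of type $0$ is a Calder\'on--Zygmund operator, which does \emph{not} map $L^1$ to $L^1$; hence $\nabla\phi_0$ is not known to be in $L^1$, and the componentwise Sobolev inequality cannot be applied. This is exactly why the paper says that the scalar trick from \cite{LN}, \cite{FGaW}, \cite{FLW_grenoble} ``does not seem to generalize to differential forms.'' You yourself flag this as the ``main obstacle,'' and indeed it is not an obstacle that can be removed by the tools you list.

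The paper's route is different in substance: it does not attempt to upgrade the weak bound at all. Instead it invokes the Lanzani--Stein inequality \cite{LS} (in the spirit of Bourgain--Brezis \cite{BB2007}), which asserts directly that for a \emph{closed} $h$-form $u$ with $1\le h\le n-1$ one has
\[
\|d^*\Delta_h^{-1}u\|_{L^{n/(n-1)}(\R^n)}\le C\|u\|_{L^1(\R^n)}.
\]
This is a deep endpoint estimate whose proof uses the cancellation structure of $d$ (the symbol of $d$ is injective on the relevant bundles) and is not a consequence of scalar potential theory. The paper then passes from compactly supported smooth closed forms to general $\alpha\in L^1_0\cap\ker d$ by approximation, with the convergence controlled via Lemma~\ref{anuli} (the annulus estimate, which is where the $L^1_0$ hypothesis is used). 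Your compact-support correction at the end is also sketchy---the ``periods over spheres'' and the exterior-domain Bogovski{\u\i} step are not substantiated---but this is secondary to the missing Lanzani--Stein input, without which the basic $L^{n/(n-1)}$ bound is simply not available.
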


We also prove a local version of this inequality. 

\begin{corollary}
For $h=1,\ldots,n-1$, let $q=n/(n-1)$. Let $B\subset \rn n$ be a   {bounded open} convex set, and let $B'$ be an open
set, $B\Subset B'$. 
Then there exists $C=C(n,B,B')$ with the following property:
\begin{enumerate}
\item Interior Poincar\'e inequality. 
For every closed $h$-form $\alpha$ in $L^1(B')$, there exists an $(h-1)$-form $\phi\in L^q(B)$, such that 
$$
d\phi=\alpha_{|B}, \qquad\mbox{and}\qquad \|\phi\|_{L^q(B)}\leq C\,\|\alpha\|_{L^1(B')}.
$$
 
\item Sobolev inequality. For every closed $h$-form $\alpha\in L^1$ with support in $B$, there exists an $(h-1)$-form $\phi\in L^q$, with support in $B'$, such that 
$$
d\phi=\alpha \qquad\mbox{and}\qquad \|\phi\|_{L^q(B')}\leq C\,\|\alpha\|_{L^1(B)}.
$$
\end{enumerate}

We shall refer to the above inequality   {as} interior Poincar\'e and interior Sobolev inequality, respectively. The
world ``interior'' is meant to stress the loss of domain from $B'$ to $B$.

\end{corollary}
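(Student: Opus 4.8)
The plan is to deduce both parts from the global Theorem by localizing with a cut-off function and a homotopy operator on convex sets; the two parts differ only in how one controls the support of the primitive.

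\emph{Interior Poincar\'e inequality.} I would first fix bounded convex open sets $B_0,B_1$ with $B\Subset B_0\Subset B_1\Subset B'$ (available since $B$ is convex and $B\Subset B'$) and a cut-off $\chi\in C^\infty_c(B_1)$ with $\chi\equiv 1$ near $\overline{B_0}$, together with a homotopy operator $K$ on the convex set $B_1$ (the cone operator with apex averaged over a small ball, as in \cite{IL}), so that $dK\omega+Kd\omega=\omega$ on $B_1$; since its kernel is dominated by $C|x-y|^{1-n}$ on the bounded set $B_1$, a Schur-type estimate shows $K$ is bounded $L^1(B_1)\to L^1(B_1)$. Given a closed $\alpha\in L^1(B')$, I would set
$$
\widetilde\alpha:=d(\chi\,K\alpha)=\chi\,\alpha+d\chi\wedge K\alpha ,
$$
extended by $0$ outside $B_1$ (legitimate since $\chi K\alpha$ vanishes near $\partial B_1$). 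This $\widetilde\alpha$ is a closed $h$-form on $\R^n$, lies in $L^1(\R^n)$ with $\|\widetilde\alpha\|_1\le C\|\alpha\|_{L^1(B')}$ (using the $L^1$-boundedness of $K$ on $\supp d\chi$), is supported in $\supp\chi\subset B'$, agrees with $\alpha$ on $B_0\supset B$, and --- being exact with compactly supported primitive $\chi K\alpha$ --- belongs to $L^1_0(\R^n)$. Applying the global Theorem to $\widetilde\alpha$ produces $\phi\in L^q(\R^n)$ with $d\phi=\widetilde\alpha$ and $\|\phi\|_q\le C\|\widetilde\alpha\|_1\le C\|\alpha\|_{L^1(B')}$; restricting to $B$ gives $d\phi=\alpha_{|B}$ and $\|\phi\|_{L^q(B)}\le C\|\alpha\|_{L^1(B')}$, which is (1).

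\emph{Sobolev inequality.} Now let $\alpha$ be closed, in $L^1$, with $\supp\alpha\subset B$. Since $1\le h\le n-1$ one has $H^h_c(\R^n)=0$, so $\alpha$ is exact with a compactly supported primitive; in particular $\int_{\R^n}\alpha\wedge\beta=0$ for every constant-coefficient $(n-h)$-form $\beta$, i.e. $\alpha\in L^1_0(\R^n)$, and the global Theorem yields a compactly supported $\phi_0\in L^q(\R^n)$ with $d\phi_0=\alpha$ and $\|\phi_0\|_q\le C\|\alpha\|_1=C\|\alpha\|_{L^1(B)}$. What remains is to push the support into $B'$. I would do this in one of two ways: either by using that the operator constructed in the proof of the global Theorem is local, so that for compactly supported data $\supp\phi_0$ can be placed in any prescribed neighbourhood of the convex hull of $\supp\alpha$ (here inside $B'$); or, using the Theorem as a black box, by fixing a ball $B''$ with $B\Subset B''\Subset B'$ and a smooth map $\psi\colon\R^n\to B''$ that is a diffeomorphism onto $B''$, equals the identity on $B$, and radially compresses $\R^n$ into $B''$ at a rate chosen so that $(\psi^{-1})^{\ast}$ is bounded on $L^q$ --- possible precisely because $h\le n-1$, which keeps the Jacobian weight $\|(D\psi)^{-1}\|^{q(h-1)}\,|\det D\psi|$ bounded --- and then taking $\phi:=(\psi^{-1})^{\ast}\phi_0$. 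Since $\psi=\mathrm{id}$ on $B\supset\supp\alpha$ one has $(\psi^{-1})^{\ast}\alpha=\alpha$, hence $d\phi=\alpha$, while $\supp\phi=\psi(\supp\phi_0)\subset B''\subset B'$ and $\|\phi\|_{L^q(B')}\le C\|\phi_0\|_q\le C\|\alpha\|_{L^1(B)}$, which is (2).

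The routine verifications (the regularity and support assertions for $\widetilde\alpha$ and the cut-offs, and that $d$ commutes with $(\psi^{-1})^{\ast}$ here) are standard. I expect two genuine obstacles. In (1), the $L^1\!\to\!L^1$ boundedness of a homotopy operator on convex sets: the cone operator based at a single vertex is \emph{not} $L^1$-bounded, which is exactly why one needs the averaged Iwaniec--Lutoborski construction. In (2), the localization of the support: one must either exploit the fine support properties of the operator behind the global Theorem, or choose the compression rate of $\psi$ carefully so as not to lose the $L^q$-bound --- and it is here that the hypothesis $h\le n-1$ re-enters.
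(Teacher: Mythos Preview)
Your approach is correct but takes a genuinely different route from the paper's.

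For the interior Poincar\'e part, the paper does \emph{not} reduce to the global Theorem. Instead it constructs an approximate homotopy $\alpha=dT\alpha+S\alpha$ on $B$ directly (Proposition~\ref{smoothing}): the operator $T$ is obtained by truncating the kernel of $d^*\Delta^{-1}$ near the diagonal, and $S$ is the smoothing remainder coming from the off-diagonal piece. The smooth closed form $S\alpha$ is then inverted on the convex set $B$ by the Iwaniec--Lutoborski homotopy in $L^q$ (where IL is unproblematic), and one sets $\phi=T\alpha+\gamma$. Your argument reverses the roles: you use IL on $L^1$ only as a \emph{localization} device, to manufacture a compactly supported exact form $\widetilde\alpha=d(\chi K\alpha)\in L^1_0$, and then let the global Theorem absorb all the analytic difficulty. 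Both routes are sound; yours is shorter once the global Theorem is available, while the paper's route yields the approximate homotopy formula as an independent tool (which is what the authors export to the Heisenberg setting in \cite{BFP3}).

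For the Sobolev part, the paper relies on the fact that the operators $T$ and $S$ in Proposition~\ref{smoothing} enlarge supports only by the truncation radius $R$, which can be taken smaller than $\mathrm{dist}(B,\partial B')$. Your option~(a)---invoking the locality hidden in the global construction---is essentially this same mechanism. Your option~(b), the radial compression $\psi:\R^n\to B''$, is a genuine alternative; it works, but note that the condition you need is that the weight $|\Lambda^{h-1}(D\psi)^{-1}|^q\,|\det D\psi|$ stay bounded, and for a compression with $g'(r)\sim r^{-1-\epsilon}$ this unwinds to $q(h-1+\epsilon)\le n+\epsilon$, which for small $\epsilon>0$ is exactly $q(h-1)<n$, i.e.\ $h\le n-1$. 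So the numerology is tight and your remark that this is where $h\le n-1$ re-enters is on point.
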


Remarkably, most of the techniques developed here can be adapted, in combination with other
{\sl ad hoc} arguments to deal with Poincar\'e and Sobolev inequalities in the Rumin complex
of Heisenberg groups (see \cite{BFP3}).

\section{Kernels}

\medskip

Throughout the present note our setting will be the Euclidean space
$\rn n$ with $n>2$.

\bigskip
  If $f$ is a real function defined in $\mathbb R^n$, we denote
    by $\ccheck f$ the function defined by $\ccheck f(p):=
    f(-p)$, and, if $T\in\mc D'(\mathbb R^n)$, then $\ccheck T$
    is the distribution defined by $\Scal{\ccheck T}{\phi}
    :=\Scal{T}{\ccheck\phi}$ for any test function $\phi$.

We remind also that the convolution $f\ast g$ is  well defined
when $f,g\in\mc D'(\rn n)$, provided at least one of them
has compact support. In this case the following identities
hold
\begin{equation}\label{convolutions var}
\Scal{f\ast g}{\phi} = \Scal{g}{\ccheck f\ast\phi}
\quad
\mbox{and}
\quad
\Scal{f\ast g}{\phi} = \Scal{f}{\phi\ast \ccheck g}
\end{equation}
 for any test function $\phi$.

Following \cite{folland_lectures}, Definition 5.3, we recall now the notion of {\it kernel of type $\mu$}
and some properties stated below in Proposition \ref{kernel}.

\begin{definition}\label{type} A kernel of type $\mu$ is a 
distribution $K\in \mc S'(\mathbb R^n)$, homogeneous  of degree $\mu-n$
that is smooth outside of the origin.

The convolution operator with a kernel of type $\mu$
$$
f\to f\ast K
$$
is still called an operator of type $\mu$.
\end{definition}

\begin{proposition}\label{kernel}
Let $K\in\mc S'(\rn r)$ be a kernel of type $\mu$ and let $D_j$ denote the $j$-th partial derivative in $\rn n$.
\begin{itemize}
\item[i)] $\ccheck K$ is again a kernel of type $\mu$;
\item[ii)] $D_jK$ and $KD_j $ are associated with  kernels of type $\mu-1$ for
$j=1,\dots,n$;
\item[iii)]  If $\mu>0$, then $K\in L^1_{\mathrm{loc}}(\mathbb R^n)$.
\end{itemize}
\end{proposition}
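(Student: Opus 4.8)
\textbf{Proof proposal for Proposition \ref{kernel}.}

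The plan is to verify the three assertions in turn, in each case reducing to the defining properties of a kernel of type $\mu$: temperedness, smoothness away from the origin, and homogeneity of degree $\mu-n$. For (i), I would check the three properties directly for $\ccheck K$. Temperedness is immediate, since $\phi\mapsto\ccheck\phi$ preserves $\mc S(\rn n)$; smoothness outside the origin is clear because $p\mapsto -p$ is a diffeomorphism of $\rn n\setminus\{0\}$; and homogeneity is preserved because the reflection $p\mapsto -p$ commutes with the dilations $\delta_\lambda p=\lambda p$, so $\ccheck K$ is again homogeneous of degree $\mu-n$. Thus $\ccheck K$ is a kernel of type $\mu$.

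For (ii), the key observations are that differentiation in the sense of distributions lowers the homogeneity degree by $1$ (if $K$ is homogeneous of degree $\mu-n$ then $D_jK$ is homogeneous of degree $\mu-n-1=(\mu-1)-n$), and that differentiation preserves both temperedness and smoothness away from the origin. Hence $D_jK$ is a kernel of type $\mu-1$. For the operator $KD_j$, I would use that the convolution operator $f\mapsto (D_jf)\ast K$ equals $f\mapsto f\ast(D_jK)$ by the standard identity $(D_jf)\ast K=f\ast(D_jK)$ (valid in $\mc D'$ when one factor has compact support, which follows from \eqref{convolutions var}), so $KD_j$ is the operator of type $\mu-1$ associated with the kernel $D_jK$.

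For (iii), when $\mu>0$ the kernel $K$ is smooth on $\rn n\setminus\{0\}$ and homogeneous of degree $\mu-n>-n$; integrating in polar coordinates, $\int_{|p|\le 1}|K(p)|\,dp$ is comparable to $\int_0^1 r^{\mu-n}r^{n-1}\,dr=\int_0^1 r^{\mu-1}\,dr<\infty$, so $K$ is locally integrable near the origin, and it is obviously locally integrable away from the origin by smoothness. One subtlety worth a word: a priori $K$ is only a tempered distribution agreeing with a homogeneous smooth function off the origin, so one should note that for $\mu>0$ this function is already in $L^1_{\mathrm{loc}}$ and therefore defines a distribution which differs from $K$ at most by one supported at $\{0\}$; homogeneity of degree $\mu-n>-n$ rules out such a correction term, so $K$ itself is (represented by) that $L^1_{\mathrm{loc}}$ function. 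This last point — matching the distributional $K$ with its smooth representative — is the only place requiring genuine care; the rest is bookkeeping with dilations and the convolution identities \eqref{convolutions var}. All three items are, in fact, classical and contained in \cite{folland_lectures}, so the proof can be kept very short.
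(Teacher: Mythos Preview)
Your argument is correct in all three parts, including the careful remark in (iii) that a putative correction term supported at the origin would have homogeneity $\le -n$ and is therefore excluded when $\mu>0$. Note, however, that the paper does not actually supply a proof of this proposition: it is stated as a recollection of standard facts from \cite{folland_lectures} (see the sentence immediately preceding Definition~\ref{type}), so there is nothing to compare your approach against beyond the reference itself. Your write-up is exactly the kind of direct verification one finds there, and your closing sentence already acknowledges this.
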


\begin{lemma}
 Let $g$ be a a kernel of type $\mu>0$,
and let $\psi\in \mc D(\rn n)$ be a test function.
Then $\psi \ast g $ is smooth on $\rn n$.
 
If, in addition, $R=R(D)$ is an homogeneous
 polynomial of degree $\ell\ge 0$ in $D:=(D_1,\dots,D_n)$,
 we have
 $$
R( \psi\ast g)(p)= O(|p|^{\mu-n-\ell})\quad\mbox{as }p\to\infty.
 $$
\end{lemma}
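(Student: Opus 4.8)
The plan is to exploit the convolution structure $\psi \ast g$ directly. First I would record the smoothness: since $\psi \in \mc D(\rn n)$ has compact support and $g \in \mc S'(\rn n)$, the convolution $\psi \ast g$ is well defined by \eqref{convolutions var}, and differentiating under the convolution gives $D^\beta(\psi \ast g) = (D^\beta \psi) \ast g$ for every multi-index $\beta$; since $D^\beta \psi$ is again a compactly supported test function, each such derivative is a well-defined distribution. To see it is an honest smooth function, I would split $g = g_0 + g_\infty$ where $g_0$ is $g$ cut off near the origin (so $g_0 \in L^1$ by Proposition \ref{kernel} iii), using $\mu>0$) and $g_\infty = g - g_0$ is smooth and supported away from the origin; then $\psi \ast g_0$ is continuous (convolution of $\mc D$ with $L^1$) and in fact smooth once one moves the derivatives onto $\psi$, while $\psi \ast g_\infty$ is smooth because $g_\infty$ is smooth and $\psi$ has compact support. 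Applying this to $D^\beta \psi$ in place of $\psi$ shows $\psi \ast g \in C^\infty(\rn n)$.

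For the decay estimate, fix $R = R(D)$ homogeneous of degree $\ell$ and write $R(\psi \ast g) = (R(D)\psi)\ast g$, so it suffices to prove: if $\eta \in \mc D(\rn n)$ and $g$ is a kernel of type $\mu>0$, then $(\eta \ast g)(p) = O(|p|^{\mu-n})$ as $p \to \infty$ (then apply this with $\eta = R(D)\psi$, noting that the target exponent $\mu - n - \ell$ is what we need because... actually we get $\mu-n$, not $\mu-n-\ell$, so the homogeneity of $g$ must be used more carefully — see below). Let me instead keep the derivatives on $g$: write $R(\psi \ast g) = \psi \ast (R(D)g)$. By Proposition \ref{kernel} ii), $R(D)g$ is (associated with) a kernel of type $\mu - \ell$. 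Now if $\mu - \ell > 0$ the claim reduces to the case $\ell = 0$ with $\mu$ replaced by $\mu-\ell$, i.e. to showing $(\psi \ast k)(p) = O(|p|^{(\mu-\ell)-n})$ for a kernel $k$ of type $\mu - \ell$. If $\mu - \ell \le 0$ one must argue slightly differently, but the homogeneity of $k$ still gives $|k(q)| \le C|q|^{(\mu-\ell)-n}$ for $q$ away from $0$, which is all that is used; so I would just use this pointwise bound on $k$ directly away from the origin.

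The core estimate is then elementary: for $|p|$ large, write $(\psi \ast k)(p) = \int_{\supp \psi} \psi(y)\, k(p-y)\, dy$. For $y$ in the (fixed, bounded) support of $\psi$ and $|p|$ large enough that $|p-y| \ge |p|/2 > 0$, the point $p - y$ stays in the region where $k$ is smooth and satisfies the homogeneity bound $|k(p-y)| \le C|p-y|^{(\mu-\ell)-n} \le C' |p|^{(\mu-\ell)-n}$; integrating against the bounded function $\psi$ over a set of finite measure yields $|(\psi\ast k)(p)| \le C'' \|\psi\|_1 \,|p|^{(\mu-\ell)-n} = O(|p|^{\mu - n - \ell})$, as desired.

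The only genuine subtlety — the step I would treat most carefully — is the bookkeeping around where the derivatives land and the case $\mu - \ell \le 0$, where $R(D)g$ need not be a "kernel of type $\mu'$" with $\mu' > 0$ in the sense of Proposition \ref{kernel} iii) (it may fail to be locally integrable at the origin). The clean way around this is to note that decay at infinity never sees the origin: one only needs that $R(D)g$ agrees, away from any neighborhood of $0$, with a smooth function homogeneous of degree $(\mu - \ell) - n$, which is immediate from the definition of kernel of type $\mu$ together with Proposition \ref{kernel} ii). Since $\supp\psi$ is bounded, for $p$ large $p - \supp\psi$ avoids a fixed neighborhood of the origin, and the homogeneity bound applies verbatim. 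This sidesteps all integrability issues and makes the estimate uniform in $p$ on $\{|p| \ge \rho\}$ for suitable $\rho = \rho(\psi)$.
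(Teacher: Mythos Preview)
The paper states this lemma without proof, so there is no argument to compare against. Your proposal is correct and is the standard argument: smoothness follows from the convolution structure (splitting $g$ into a compactly supported $L^1_{\mathrm{loc}}$ piece near the origin and a smooth piece away from it is a clean way to handle this), and the decay follows by writing $R(D)(\psi\ast g)=\psi\ast(R(D)g)$ and using that, away from the origin, $R(D)g$ is smooth and homogeneous of degree $\mu-n-\ell$, so that for $|p|$ large the integral $\int_{\supp\psi}\psi(y)\,(R(D)g)(p-y)\,dy$ is bounded by $C\|\psi\|_{L^1}|p|^{\mu-n-\ell}$. Your handling of the case $\mu-\ell\le 0$ is the right one: since $\supp\psi$ is bounded, for $|p|$ large the translated test function $\tau_p\ccheck\psi$ is supported in the region where $R(D)g$ coincides with a smooth homogeneous function, so the distributional pairing reduces to an ordinary integral and no local integrability of $R(D)g$ at the origin is needed.
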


In particular, if $\psi\in \mc D(\rn n)$, and $K$ is a kernel of type
$\mu <n$, then both $\psi \ast K$
and all its derivatives belong to $L^\infty(\rn n)$

{  
\begin{corollary}\label{closed ex bis}
If  $K$ is a kernel of type $\mu\in (0,n)$,
$u\in L^1(\mathbb R^n)$ and
$\psi\in \mc D(\mathbb R^n)$, then
\begin{equation}
\Scal{u\ast K}{ \psi} = \Scal{ u}{\psi\ast\ccheck K}.
\end{equation}
\end{corollary}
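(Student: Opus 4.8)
The plan is to prove the identity $\Scal{u\ast K}{\psi} = \Scal{u}{\psi\ast\ccheck K}$ by reducing it to the case already recorded in \eqref{convolutions var}, which is valid when one of the two distributions being convolved has compact support. The obstacle is precisely that here neither $u$ nor $K$ need have compact support: $u\in L^1(\R^n)$ is merely integrable, and $K$ is a kernel of type $\mu\in(0,n)$, hence (by Proposition \ref{kernel} iii)) only locally integrable with slow decay at infinity. So the convolution $u\ast K$ must first be given a meaning, and then the pairing against $\psi$ must be justified by an approximation/truncation argument.

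First I would observe that $u\ast K$ is a well-defined element of $L^1_{\mathrm{loc}}(\R^n)$ (in fact it makes sense pointwise a.e.): writing $u = u_1 + u_2$ where $u_1 = u\,\mathbf 1_{B(0,N)}$ and $N$ is chosen large, the convolution $u_1 \ast K$ is the convolution of an $L^1$ function of compact support with an $L^1_{\mathrm{loc}}$ function, hence is in $L^1_{\mathrm{loc}}$; meanwhile $u_2$ is small in $L^1$ and $K$ restricted to a neighbourhood of the relevant region is bounded, so $u_2 \ast K$ is controlled on compact sets. (Alternatively one simply invokes Young-type estimates locally, since $K\in L^1_{\mathrm{loc}}$.) The point is that $\Scal{u\ast K}{\psi}$ makes sense as $\int (u\ast K)\psi$. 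Symmetrically, by the Lemma and the remark following it, $\psi\ast\ccheck K$ is smooth and bounded on $\R^n$ (and $\ccheck K$ is again a kernel of type $\mu<n$ by Proposition \ref{kernel} i)), so $\Scal{u}{\psi\ast\ccheck K} = \int u\,(\psi\ast\ccheck K)$ is a well-defined absolutely convergent integral because $u\in L^1$.

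Next I would truncate $u$: set $u_N := u\,\mathbf 1_{B(0,N)}$, which has compact support, so \eqref{convolutions var} gives $\Scal{u_N\ast K}{\psi} = \Scal{u_N}{\psi\ast\ccheck K}$ for every $N$. It then remains to pass to the limit $N\to\infty$ on both sides. The right-hand side converges by dominated convergence, since $u_N\to u$ in $L^1$ and $\psi\ast\ccheck K\in L^\infty$. For the left-hand side, write $\Scal{u_N\ast K}{\psi} = \Scal{u_N}{\ccheck\psi\ast\ccheck K}$ — wait, more carefully: by \eqref{convolutions var}, $\Scal{u_N\ast K}{\psi} = \Scal{u_N}{\psi\ast\ccheck K}$ directly (this is the second identity in \eqref{convolutions var} with $f=u_N$, $g=K$), so in fact the left-hand sides and right-hand sides coincide term by term, and the only issue is that we must know $\Scal{u\ast K}{\psi} = \lim_N \Scal{u_N\ast K}{\psi}$, i.e. that the pairing of $u\ast K$ with the fixed test function $\psi$ is continuous under the truncation. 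Since $\supp\psi$ is compact, say contained in $B(0,\rho)$, only the values of $K$ near $B(0,\rho)$ enter through $\psi$; more precisely $\int(u_N\ast K)\psi = \int\!\!\int u_N(y)K(x-y)\psi(x)\,dx\,dy$ and the inner integral $\int K(x-y)\psi(x)\,dx = (\ccheck K\ast\check\psi)^{\vee}(y)$ is a bounded function of $y$ (again by the Lemma), uniformly in $N$, so Fubini plus dominated convergence in $y$ (with dominating function $|u(y)|\cdot\|\psi\ast\ccheck K\|_\infty$) gives the claimed convergence. Assembling the three limits yields the identity. I expect the only genuinely delicate point to be the careful bookkeeping that $u\ast K$ is well-defined and that its pairing with $\psi$ is exactly $\int\!\!\int u(y)K(x-y)\psi(x)$ with the Fubini interchange legitimate; everything else is the boundedness of $\psi\ast\ccheck K$ supplied by the Lemma together with $L^1$-density of compactly supported functions.
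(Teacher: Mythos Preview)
Your proposal is correct and matches the paper's intent: the corollary is stated without proof, immediately after the Lemma establishing that $\psi\ast\ccheck K\in L^\infty(\R^n)$, and your argument---truncate $u$ to gain compact support, apply \eqref{convolutions var}, then pass to the limit using that $\psi\ast\ccheck K$ is bounded---is precisely the mechanism the paper has in mind. One could streamline the argument to a single application of Fubini (the double integral $\int\!\!\int |u(y)||K(x-y)||\psi(x)|\,dx\,dy$ is finite since $|\psi|\ast|\ccheck K|$ is bounded), which avoids the slight circularity you noticed; but your truncation route is equally valid.
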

In this equation, the left hand side is the action of a  distribution on a  test function, see formula \eqref{revised 1}, the right hand side is the inner product of an $L^1$ vector-valued function with an $L^\infty$ vector-valued function.
}

{  \begin{remark}
The conclusion of Corollary \ref{closed ex bis} still holds
if we assume $K\in L^1_{\mathrm{loc}}(\rn n)$, provided $u\in L^1(\mathbb R^n)$ is compactly
supported.
\end{remark}
}

\begin{lemma}\label{anuli}
Let $K$ be a kernel of type $\alpha\in (0,n)$, then for any $f\in L^1(\rn n)$ such that
$$
\int_{\rn n} f(y)\, dy = 0,
$$
we have:
$$
R^{-\alpha} \int_{B(0,2R)\setminus B(0,R)} |K\ast f| \, dx \longrightarrow 0\qquad\mbox{as $R\to\infty$.}
$$

\end{lemma}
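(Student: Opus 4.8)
The plan is to exploit the homogeneity of $K$ together with the cancellation $\int f = 0$ to gain decay, splitting $f$ into a piece supported near the origin and a far tail. Fix $R$ large. Write $f = f\,\chi_{B(0,R/2)} + f\,(1-\chi_{B(0,R/2)}) =: f_1 + f_2$, and estimate $\int_{B(0,2R)\setminus B(0,R)}|K*f|\,dx \le \int_{B(0,2R)\setminus B(0,R)}|K*f_1|\,dx + \int_{\rn n}|K*f_2|\,dx$. For the second term, since $K$ is a kernel of type $\alpha\in(0,n)$ it lies in $L^1_{\mathrm{loc}}$ by Proposition \ref{kernel} iii), and more is true: homogeneity of degree $\alpha-n$ gives $\|K\|_{L^1(B(0,\rho))} = c\,\rho^\alpha$. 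One would like $\int_{\rn n}|K*f_2| \le \|K\|_{L^1(\text{relevant ball})}\|f_2\|_1$, but $K\notin L^1(\rn n)$, so instead I would note that $K*f_2$ is itself integrable over any annulus and, more simply, just bound $R^{-\alpha}\int_{\rn n}|K*f_2|\,dx$ crudely and then let $\|f_2\|_1 \to 0$ as $R\to\infty$ (which holds because $f\in L^1$). To make this rigorous one restricts the integration domain; since we only integrate $|K*f|$ over the annulus $B(0,2R)\setminus B(0,R)$, for $x$ in that annulus and $y\in\supp f_2^c\cup\supp f_1$ one controls $|x-y|$ from below and above in terms of $R$, so $|K(x-y)|\lesssim (|x|/2)^{\alpha-n}\sim R^{\alpha-n}$ away from the diagonal, giving directly $\int_{B(0,2R)\setminus B(0,R)}|K*f_1|\,dx \lesssim R^{\alpha-n}\cdot R^n\cdot\sup_{x}\int_{B(0,R/2)}|K(x-y)|\,dy \cdot \dots$ — here one must be careful near the diagonal.

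More cleanly: the essential point is the cancellation. Write
\[
(K*f)(x) = \int_{\rn n}\big(K(x-y)-K(x)\big)f(y)\,dy,
\]
valid because $\int f = 0$ (this uses $K\in L^1_{\mathrm{loc}}$ and $f$ compactly supported; in general one truncates $f$ first and handles the tail as above). For $x$ in the annulus $R\le|x|\le 2R$ and $y$ in a fixed ball $B(0,\rho)$ with $\rho$ fixed, the mean value theorem applied to $K$ (smooth off the origin, with $|\nabla K(z)| = O(|z|^{\alpha-n-1})$ by Proposition \ref{kernel} ii)) yields $|K(x-y)-K(x)| \lesssim |y|\,|x|^{\alpha-n-1} \lesssim |y|\,R^{\alpha-n-1}$. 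Integrating over $x$ in the annulus contributes a factor $\sim R^n$, so the contribution of the part of $f$ supported in $B(0,\rho)$ to $R^{-\alpha}\int_{\text{annulus}}|K*f|$ is $\lesssim R^{-\alpha}\cdot R^n\cdot R^{\alpha-n-1}\int_{B(0,\rho)}|y||f(y)|\,dy = R^{-1}\int_{B(0,\rho)}|y||f(y)|\,dy$, which $\to 0$.

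It remains to absorb the tail. Given $\eps>0$, choose $\rho$ so large that $\int_{|y|>\rho}|f(y)|\,dy<\eps$, and split $f = f\chi_{B(0,\rho)} + f\chi_{B(0,\rho)^c}$. The first piece is handled by the previous paragraph (for each fixed $\rho$ the bound is $O(R^{-1})\to 0$). For the tail piece, one uses that $R^{-\alpha}\|K\|_{L^1(B(0,3R))} = R^{-\alpha}\cdot c(3R)^\alpha = c'$ is bounded, and for $x$ in the annulus and $y$ with $|y|>\rho$ one still has $|x-y|\le 4R$ once $\rho$ is also taken $\le R$; a Fubini/Young-type estimate on the truncated kernel gives $R^{-\alpha}\int_{B(0,2R)\setminus B(0,R)}|K\chi_{B(0,4R)}*(f\chi_{B(0,\rho)^c})|\,dx \lesssim R^{-\alpha}\|K\|_{L^1(B(0,4R))}\,\eps \lesssim \eps$, plus the contribution of $K$ restricted to $|z|>4R$, which vanishes on the relevant domain. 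Hence $\limsup_{R\to\infty} R^{-\alpha}\int_{B(0,2R)\setminus B(0,R)}|K*f|\,dx \lesssim \eps$ for every $\eps>0$, proving the claim.

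The main obstacle is the bookkeeping around the diagonal $x=y$ and the fact that $K\notin L^1(\rn n)$: one cannot simply write $K*f$ as an absolutely convergent double integral over all of $\rn n$ without first truncating $f$ (or $K$). Once the truncation is in place, the two mechanisms — the $R^{-1}$ gain from the gradient bound on $K$ combined with cancellation for the bulk of $f$, and the smallness of the $L^1$-tail of $f$ combined with the scale-invariant bound $R^{-\alpha}\|K\|_{L^1(B(0,cR))} = $ const — are each routine; the care lies in choosing the order of limits ($\rho$ depending on $\eps$, then $R\to\infty$).
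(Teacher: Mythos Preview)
Your approach is essentially the same as the paper's: both insert the cancellation $\int f=0$ to replace $K(x-y)$ by $K(x-y)-K(x)$, then split the $y$-integral by size, using the gradient bound $|K(x-y)-K(x)|\lesssim |y|\,|x|^{\alpha-n-1}$ for $|y|$ small relative to $|x|$, and the smallness of the $L^1$-tail of $f$ for $|y|$ large. The only cosmetic difference is that the paper splits at the $R$-dependent radii $|y|=R/2$ and $|y|=4R$ (three zones, each shown to tend to $0$ directly), whereas you split at a fixed $\rho$ chosen from $\eps$ and then send $R\to\infty$; these are interchangeable.

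Two places in your write-up need tightening. First, the claim that the contribution of $K\chi_{\{|z|>4R\}}$ ``vanishes on the relevant domain'' is false: for $x$ in the annulus and $|y|$ large one certainly has $|x-y|>4R$. What is true is that $|x-y|>4R$ together with $|x|\le 2R$ forces $|y|>2R$, so this piece is bounded by
\[
R^{-\alpha}\cdot C R^{\alpha-n}\cdot |B(0,2R)\setminus B(0,R)|\cdot \int_{|y|>2R}|f(y)|\,dy
\;\lesssim\; \int_{|y|>2R}|f|\;\longrightarrow\;0,
\]
which is exactly how the paper handles its zone $I_3$. Second, be consistent about when you split: if you split $f$ \emph{before} invoking the cancellation, then $f\chi_{B(0,\rho)}$ no longer has mean zero and an extra term $K(x)\int_{B(0,\rho)}f = -K(x)\int_{|y|>\rho}f$ appears; if you split \emph{after}, the $-K(x)$ term survives in the tail piece. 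Either way the extra contribution is $R^{-\alpha}\int_{\text{annulus}}|K(x)|\,dx\cdot \eps \lesssim \eps$, so the argument closes, but this should be said.
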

\begin{proof}
If $R>1$  we have: 
\begin{equation*}\begin{split}
R^{-\alpha}  & \int_{R< |  x |  <2R} |K\ast f| \, dx
=
R^{-\alpha}\int_{R< |  x |  <2R} \,dx \Big| \int  \,  K(x-y) f(y) \, dy\Big|
\\&
= R^{-\alpha}\int_{R< |  x |  <2R} \,dx \Big| \int  \, \big[ K(x-y)-K(x)\big] f(y) \, dy\Big|
\\&
\le
R^{-\alpha}  \int  \,  | f(y)| \Big(  \int_{R< |  x |  <2R}  \Big| K(x-y)-K(x) \Big|  \,dx \Big)  \, dy
\\&
=R^{-\alpha} \int_{ |  y |  <\frac12 R} \,  | f(y)| \big( \cdots \big)  \, dy
+
R^{-\alpha} \int_{4R>  |  y |   >\frac12 R} \,  | f(y)| \big( \cdots \big)  \, dy
\\&
+
R^{-\alpha} \int_{ |  y |   > 4 R} \,  | f(y)| \big( \cdots \big)  \, dy
\\&
=: R^{-\alpha} I_{1}(R) + R^{-\alpha} I_{2}(R) + R^{-\alpha} I_{3}(R).
\end{split}\end{equation*}
Consider first the third term above. By homogeneity we have
$$
 I_{3}(R) \le C_K\,  \int_{ |  y |  >4 R} \,  | f(y)| \big( \int_{R< |  x |  <2R} ( |  x-y |  ^{-n+\alpha} +  |  x |  ^{-n+\alpha}  ) \,dx  \big)  \, dy
$$
Notice now that, if $ |  y |   >4R$ and $ R< |  x |  <2R$, then $ |  x-y |  \ge  |  y |   -  |  x |  
\ge 4 R -R   {\ge} \frac{3}2  |  x |  $. Therefore
\begin{equation*}\begin{split}
 |  x-y |  ^{-n+\alpha} & +  |  x |  ^{-n+\alpha} \le \left\{\big( \dfrac{2}{3}\big)^{n-\alpha} + 1\right\}  |  x |  ^{-n+\alpha},
\end{split}\end{equation*}
and then
$$
 \int_{R< |  x |  <2R} ( |  x-y |  ^{-n+\alpha} +  |  x |  ^{-n+\alpha}  ) \,dx \le C_\alpha \, R^\alpha.
$$
Thus
$$
R^{-\alpha} I_{3}(R) \le C_{K,\alpha}\,  \int_{ |  y |  >4R} \,  | f(y)|   \, dy \longrightarrow 0
$$
as $R\to\infty$.

Consider now the second term. Again we have
$$
 I_{2}(R) \le C_K\,  \int_{\frac12 R< |  y |  <4 R} \,  | f(y)| \big( \int_{R< |  x |  <2R} ( |  x-y |  ^{-n+\alpha} +  |  x |  ^{-n+\alpha}  ) \,dx  \big)  \, dy.
$$
Obviously, as above,
$$
\int_{R< |  x |  <2R}  |  x |  ^{-n+\alpha} \,dx \le C R^\alpha.
$$
Notice now that, if $\dfrac12 R <  |  y |   >4R$ and $ R< |  x |  <2R$, then $ |  x-y |  \le  |  x |   +  |  y |  
\ge 6R$. Hence
$$
 \int_{\frac12 R< |  y |  <4 R} \,  | f(y)| \big( \int_{ |  x-y |  <6R}  |  x-y |  ^{-n+\alpha} \,dx  \big)  \, dy \le CR^\alpha.
$$
Therefore 
$$
R^{-\alpha}I_{2}(R) \le C_K\,  \int_{\frac12 R< |  y |  <4 R} \,  | f(y)|\, dy \longrightarrow 0
$$
as $R\to\infty$.
Finally, if $ |  y |   < \frac{R}2$ and $ R< |  x |  <2R$ we have $ |  y |   < \frac12  |  x |  $ so that, by \cite{folland_stein}
Proposition 1.7 and Corollary 1.16,
\begin{equation*}\begin{split}
R^{-\alpha} I_{1}(R) & \le C_K\,  \int_{ |  y |  <\frac12 R} \,  | f(y)| \big( \int_{R< |  x |  <2R} \frac{ |  y |  }{ |  x |  ^{n-\alpha+1}} \,dx  \big)  \, dy
 \\&
 = C_K\,  \int_{\rn n} \,  | f(y)| |  y |   \chi_{[0,\frac12 R]}( |  y |  ) \big(R^{-\alpha} \int_{R< |  x |  <2R} \frac{1}{ |  x |  ^{n-\alpha+1}} \,dx  \big)  \, dy
  \\&
\le C_K\,  \int_{\rn n} \,  | f(y)| |  y |   \chi_{[0,\frac12 R]}( |  y |  )R^{-1} \, dy=: C_K\,  \int_{\rn n} \,  | f(y)|H_R( |  y |  ) \, dy.
\end{split}\end{equation*}
Obviously, for any fixed $y\in \he n $ we have $( |  y |  ) H_R ( |  y |  )\to 0$ as $R\to\infty$. On the other hand, 
$ | f(y)|H_R( |  y |  ) \le \frac12 |f(y)|$, so that, by dominated convergence theorem,
$$
R^{-\alpha} I_{1}(R) \longrightarrow 0
$$
as $R\to\infty$.

This completes the proof of the lemma.

\end{proof}

\begin{definition} Let $f$ be a measurable function on $\rn n$. If $t>0$ we set
$$
\lambda_f(t) = |\{|f|>t\}|.
$$
If $1\le p\le\infty$ and
$$
 \sup_{t>0} \lambda_f^p(t)  <\infty,
$$
we say that $f\in L^{p,\infty}(\rn n)$.
\end{definition}

\begin{definition}\label{M}
Following \cite{BBC}, Definition A.1, if $1<p<\infty$, we set
$$
\| u\|_{M^p} : = \inf \{C\ge 0 \, ; \, \int_K |u| \, dx \le C |K|^{1/p'}\;
\mbox{for all $L$-measurable set $K\subset {\R^{n}}$}\}.
$$
\end{definition}

By \cite{BBC}, Lemma A.2, we obtain

\begin{lemma}
If $1<p<\infty$, then
$$
\dfrac{(p-1)^p}{p^{p+1}}  \| u\|_{M^p }^p  \le \sup_{\lambda >0} \{\lambda^p | \{|u|>\lambda\} |\, \} \le  \| u\|_{M^p } ^p.
$$
In particular, if $1<p<\infty$, then  $M^p  = L^{p,\infty}(\rn n)$.
\end{lemma}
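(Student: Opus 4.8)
The plan is to prove the two displayed inequalities separately, both by elementary manipulations of the distribution function $\lambda_u$; the identification $M^p=L^{p,\infty}(\R^n)$ is then an immediate consequence.

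For the right-hand inequality, fix $\lambda>0$ and test the definition of $\|u\|_{M^p}$ against the particular measurable set $K=\{|u|>\lambda\}$. If $|K|<\infty$ this gives
$$
\lambda\,|K|\le\int_K|u|\,dx\le\|u\|_{M^p}\,|K|^{1/p'},
$$
hence $\lambda\,|K|^{1-1/p'}=\lambda\,|K|^{1/p}\le\|u\|_{M^p}$, that is $\lambda^p\,\lambda_u(\lambda)\le\|u\|_{M^p}^p$. If instead $|K|=\infty$, apply the same estimate to subsets of $K$ of arbitrarily large finite measure to force $\|u\|_{M^p}=\infty$, so that the inequality is trivially true. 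Taking the supremum over $\lambda>0$ yields $\sup_{\lambda>0}\lambda^p\lambda_u(\lambda)\le\|u\|_{M^p}^p$.

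For the left-hand inequality, set $A:=\sup_{\lambda>0}\lambda^p\lambda_u(\lambda)$ and assume $A<\infty$ (otherwise there is nothing to prove). Let $K\subset\R^n$ be measurable with $0<|K|<\infty$. By the layer-cake formula and the trivial bound $|\{x\in K:|u(x)|>t\}|\le\min\{|K|,\lambda_u(t)\}\le\min\{|K|,A\,t^{-p}\}$,
$$
\int_K|u|\,dx=\int_0^\infty|\{x\in K:|u(x)|>t\}|\,dt\le\int_0^\infty\min\{|K|,A\,t^{-p}\}\,dt.
$$
Splitting the last integral at $t_0:=(A/|K|)^{1/p}$, the value of $t$ at which the two competing bounds agree, and computing the two elementary integrals gives
$$
\int_K|u|\,dx\le|K|\,t_0+\frac{A}{p-1}\,t_0^{1-p}=\frac{p}{p-1}\,A^{1/p}\,|K|^{1/p'}.
$$
Since $K$ was arbitrary, $\|u\|_{M^p}\le\frac{p}{p-1}\,A^{1/p}$, hence $\|u\|_{M^p}^p\le\frac{p^p}{(p-1)^p}A\le\frac{p^{p+1}}{(p-1)^p}A$, which is the asserted inequality.

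I do not expect a genuine obstacle here: the only points requiring a little care are the bookkeeping when $|K|$ or $A$ is infinite (handled above by passing to finite subsets or noting triviality), and the justification of the layer-cake identity, which is immediate from Tonelli's theorem since the integrand is nonnegative. Note that the argument actually produces the slightly sharper constant $\frac{p^p}{(p-1)^p}$, which only strengthens the statement.
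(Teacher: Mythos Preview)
Your proof is correct. The paper itself does not prove this lemma but merely cites \cite{BBC}, Lemma A.2; your argument is precisely the standard elementary proof one finds there (layer-cake plus splitting at the balancing level $t_0=(A/|K|)^{1/p}$), and as you observe it in fact yields the sharper constant $\dfrac{(p-1)^p}{p^{p}}$ in place of $\dfrac{(p-1)^p}{p^{p+1}}$.
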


\begin{corollary}\label{marc alternative coroll}  If $1\le s <p$, then $M^p \subset L^s_{\mathrm{loc}} (\rn n)\subset L^1_{\mathrm{loc}} (\rn n)$.

\end{corollary}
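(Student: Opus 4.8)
The plan is to combine the identification $M^p = L^{p,\infty}(\rn n)$ from the preceding lemma with the layer-cake representation of the $L^s$-norm over a bounded set. First I would reduce matters to proving $L^{p,\infty}(\rn n)\subset L^s_{\mathrm{loc}}(\rn n)$ for $1\le s<p$: the equality $M^p=L^{p,\infty}(\rn n)$ takes care of the first inclusion in the statement, while the second one, $L^s_{\mathrm{loc}}(\rn n)\subset L^1_{\mathrm{loc}}(\rn n)$, is immediate from H\"older's inequality, since every bounded set has finite measure and $s\ge 1$.

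So let $u\in L^{p,\infty}(\rn n)$. By the lemma above,
$$
A:=\sup_{\lambda>0}\lambda^p\,|\{|u|>\lambda\}|\le \|u\|_{M^p}^p<\infty .
$$
Fix a bounded measurable set $K\subset\rn n$ and write $E_t:=\{x\in K:\ |u(x)|>t\}$. I would start from
$$
\int_K |u|^s\,dx = s\int_0^\infty t^{s-1}\,|E_t|\,dt,
$$
use the two elementary bounds $|E_t|\le |K|$ and $|E_t|\le |\{|u|>t\}|\le A\,t^{-p}$, and split the integral at $t=1$, applying the first bound on $(0,1)$ and the second on $(1,\infty)$:
$$
\int_K |u|^s\,dx \le s\,|K|\int_0^1 t^{s-1}\,dt + sA\int_1^\infty t^{s-1-p}\,dt = |K| + \frac{sA}{p-s}<\infty .
$$
Hence $u\in L^s_{\mathrm{loc}}(\rn n)$, and combined with the reduction above this proves the corollary.

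The argument is essentially routine; the only point that requires attention is the exponent bookkeeping in the tail integral $\int_1^\infty t^{s-1-p}\,dt$, which converges if and only if $s-1-p<-1$, i.e.\ $s<p$. This is exactly where the hypothesis is used, and it also explains why $s=p$ cannot be allowed. The piece near the origin, $\int_0^1 t^{s-1}\,dt$, is finite because $s\ge 1>0$, and the appearance of $|K|$ is precisely what restricts the conclusion to $L^s_{\mathrm{loc}}(\rn n)$ rather than global $L^s(\rn n)$.
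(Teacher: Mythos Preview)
Your proof is correct, but it takes a different route from the paper's. The paper argues in one line: if $u\in M^p$ then $|u|^s\in M^{p/s}$ (this follows from the weak-$L^p$ identification, since $\{|u|^s>\lambda\}=\{|u|>\lambda^{1/s}\}$), and since $p/s>1$, Definition~\ref{M} applied to $|u|^s$ gives directly $\int_K |u|^s\,dx\le \||u|^s\|_{M^{p/s}}\,|K|^{1-s/p}$ for every measurable $K$. Your layer-cake computation is more hands-on and avoids invoking the $M^{p/s}$ structure; the price is a slightly weaker conclusion, since your bound $|K|+\frac{sA}{p-s}$ (with the split fixed at $t=1$) does not exhibit the uniform scaling $\int_K|u|^s\le C|K|^{1-s/p}$ that the paper's argument yields automatically. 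For the bare statement of the corollary either approach is perfectly adequate; if one later needs the quantitative dependence on $|K|$, the paper's formulation (or, equivalently, optimizing your split point as $t_0\sim (A/|K|)^{1/p}$) is preferable.
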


\begin{proof} If $u\in M^p$ then $|u|^s\in M^{p/s}$, and we can conclude
thanks to Definition \ref{M}.

\end{proof}

\begin{lemma}\label{convolutions} Let $E$ be a kernel of type $\alpha\in (0,n)$. Then for all $f\in L^1(\rn n)$ we have $f\ast E\in M^{n/(n-\alpha)} $
and there exists $C>0$ such that 
$$
 \| f\ast E\|_{M^{n/(n-\alpha)}} \le C\|f \|_{L^1({\rn n})}
   $$
for all $f\in L^1(\rn n)$. In particular, by Corollary \ref{marc alternative coroll}, $f\ast E\in L^1_{\mathrm{loc}}$.
\end{lemma}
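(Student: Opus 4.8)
The plan is to reduce the statement to the classical endpoint (weak-type) Hardy--Littlewood--Sobolev estimate for the Riesz potential, and then to convert the resulting weak-$L^p$ bound into an $M^p$ bound by means of the lemma comparing $L^{p,\infty}$ and $M^p$. Throughout the argument I would set $p:=n/(n-\alpha)$, so that $1+\frac{\alpha}{n-\alpha}=p$.

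First I would record a pointwise domination. Since $E$ is homogeneous of degree $\alpha-n$ and smooth away from the origin, $|E(x)|\le C_E|x|^{\alpha-n}$ for $x\neq 0$. Writing $E=E\chi_{B(0,1)}+E\chi_{\R^n\setminus B(0,1)}$, the first summand lies in $L^1$ (Proposition \ref{kernel} iii)) and the second in $L^\infty$; hence for $f\in L^1(\R^n)$ the convolution $f\ast E$ is well defined, belongs to $L^1+L^\infty$, and satisfies $|f\ast E|(x)\le C_E\,\big(|f|\ast|\cdot|^{\alpha-n}\big)(x)$ for a.e.\ $x$. (This splitting is the one genuinely careful point, since $|\cdot|^{\alpha-n}$ is not globally integrable.) So it is enough to bound $I_\alpha g:=g\ast|\cdot|^{\alpha-n}$ for $0\le g\in L^1$.

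Next I would prove the weak-type bound $\lambda^{p}\,|\{I_\alpha g>\lambda\}|\le C\,\|g\|_1^{p}$ for all $\lambda>0$. Fix $\lambda$ and a radius $r>0$ to be chosen, and split the kernel as $|x|^{\alpha-n}=k_0+k_\infty$ with $k_0:=|x|^{\alpha-n}\chi_{B(0,r)}$ and $k_\infty:=|x|^{\alpha-n}\chi_{\R^n\setminus B(0,r)}$. A computation in polar coordinates gives $\|k_0\|_1=c_{n,\alpha}\,r^{\alpha}$ and $\|k_\infty\|_\infty=r^{\alpha-n}$, so Young's inequality yields $\|g\ast k_0\|_1\le c_{n,\alpha}\,r^\alpha\|g\|_1$ and $\|g\ast k_\infty\|_\infty\le r^{\alpha-n}\|g\|_1$. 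Choosing $r$ so that $r^{\alpha-n}\|g\|_1=\lambda/2$ forces $\{I_\alpha g>\lambda\}\subset\{g\ast k_0>\lambda/2\}$, and Chebyshev's inequality then gives
$$|\{I_\alpha g>\lambda\}|\le \tfrac{2}{\lambda}\,\|g\ast k_0\|_1\le \tfrac{2c_{n,\alpha}}{\lambda}\,r^\alpha\|g\|_1=C_{n,\alpha}\Big(\tfrac{\|g\|_1}{\lambda}\Big)^{p},$$
using $r^\alpha=(2\|g\|_1/\lambda)^{\alpha/(n-\alpha)}$ together with $1+\frac{\alpha}{n-\alpha}=p$.

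Finally, combining $|f\ast E|\le C_E\,I_\alpha(|f|)$ with the previous step yields $\sup_{\lambda>0}\lambda^{p}|\{|f\ast E|>\lambda\}|\le C\|f\|_1^{p}$, i.e.\ $f\ast E\in L^{p,\infty}(\R^n)$. The left-hand inequality of the lemma comparing $M^p$ and $L^{p,\infty}$ (namely $\frac{(p-1)^p}{p^{p+1}}\|u\|_{M^p}^p\le\sup_{\lambda}\lambda^p|\{|u|>\lambda\}|$) then upgrades this to $\|f\ast E\|_{M^{n/(n-\alpha)}}\le C\|f\|_{L^1(\R^n)}$, and $f\ast E\in L^1_{\mathrm{loc}}$ follows from Corollary \ref{marc alternative coroll}. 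I do not expect a real obstacle: the only delicate point is the well-definedness of the convolution addressed in the second step, and the technical heart is the standard kernel-splitting and optimization in the weak-type estimate.
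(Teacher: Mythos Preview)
Your argument is correct: the pointwise bound $|E(x)|\le C_E|x|^{\alpha-n}$ follows from homogeneity and smoothness on the sphere, the kernel-splitting optimization yields the weak-type $(1,p)$ bound for the Riesz potential, and the comparison lemma between $L^{p,\infty}$ and $M^p$ closes the argument. The well-definedness step via the $L^1+L^\infty$ splitting of the kernel is handled cleanly.

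As for comparison with the paper: the paper does \emph{not} supply a proof of this lemma. It is stated as a known fact; the relevant reference is the weak-type estimate quoted in the introduction (\cite{folland_stein}, Theorem~6.10), which asserts that an operator of type $\alpha$ maps $L^1$ into the weak Marcinkiewicz space $L^{n/(n-\alpha),\infty}$. Combined with the comparison lemma (from \cite{BBC}) identifying $M^p$ with $L^{p,\infty}$, this gives the statement. Your write-up effectively reproduces the standard proof behind that cited theorem, so there is no genuine methodological difference---you have simply unpacked what the paper takes for granted.
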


As in \cite{BFP2}, Lemma 4.4 and Remark 4.5, we have:

\begin{remark}
 
Suppose $0< \alpha<n$.
If $K$ is a kernel of type $\alpha$
and $\psi \in \mc D(\rn n)$, $\psi\equiv 1$ in a neighborhood of the origin, then
the statements of Lemma \ref{convolutions} still
hold if we replace $K$ by $(1-\psi )K$ or by $\psi K$. 

\end{remark}

\section{Differential forms and currents} Let $(dx_1,\dots, dx_{n}  )$ be the canonical basis of $(\rn n)^*$ and 
indicate as $\scalp{\cdot}{\cdot}{} $ the
inner product in $(\rn n)^*$  that makes $(dx_1,\dots, dx_{n}  )$ 
an orthonormal basis. We put
$       \bigwedge^0 (\rn n) := \R $
and, for $1\leq h \leq n$,
\begin{equation*}
\begin{split}
         \bigwedge^h(\rn n)& :=\mathrm {span}\{ dx_{i_1}\wedge\dots \wedge dx_{i_h}:
1\leq i_1< \dots< i_h\leq n\}
\end{split}
\end{equation*}
the linear space of the alternanting $h$-forms on $\rn n$.
If $I:= ({i_1},\dots , {i_h})$ with
$1\leq i_1< \dots< i_h\leq n$, we set $|I|:=h$ and
$$
dx^I:= dx_{i_1}\wedge\dots \wedge dx_{i_h}.
$$

We indicate as $\scalp{\cdot}{\cdot}{} $ also the
inner product in $\bigwedge^h(\rn n)$  that makes $(dx_1,\dots, dx_{n} )$ 
an orthonormal basis.

By translation,  $\bigwedge^h(\rn n)$ defines a fibre bundle over $\rn n$, still denoted by $\bigwedge^h(\rn n)$. A differential
form on $\rn n$ is a section of this fibre bundle.

Through this Note, if $0\le h\le n$ and $\mc U\subset \rn n$ is an open set, we denote by $\Omega^h(\mc U)$ the space of
differential $h$-forms on $\mc U$, and by $d:\Omega^h(\mc U)\to\Omega^{h+1}(\mc U)$
the exterior differential. Thus $(\Omega^\bullet(\mc U), d)$ is the de Rham complex
in $\mc U$ and any $u\in \Omega^h$ can be written as
$$
u= \sum_{|I|=h} u_I dx^I.
$$

\begin{definition}
 If $\mc U\subset\rn n$ is an open set and $0\le h\le n$,
we say that $T$ is a  $h$-current on $\mc U$
if $T$ is a continuous linear functional on $\mc D(\mc U,  \bigwedge^h(\rn n))$
endowed with the usual topology. We write $T\in \mc D'(\mc U,  \bigwedge^h(\rn n))$.
If
$ u \in L^1_{\mathrm{loc}}(\mc U,  \bigwedge^h(\rn n))$, then $u$ can be
identified canonically with a $h$-current $T_u$ through the
formula
\begin{equation*}
\Scal{T_u}{\varphi}:=\int_{\mc U} u\wedge \ast \varphi
=  \int_{\mc U} \scal{u}{\varphi}\, dx
\end{equation*} for any $\varphi\in \mc D(\mc U,  \bigwedge^h(\rn n))$.
\end{definition}

From now on, if there is no way to misunderstandings, and 
$ u \in L^1_{\mathrm{loc}}(\mc U,  \bigwedge^h(\rn n))$, we shall write $u$ instead of $T_u$.

Suppose now $u$ is sufficiently smooth (take for instance $u\in\mc \mc C^\infty(\rn n, \bigwedge^h(\rn n) )$. If $\phi\in\mc D(\rn n, \bigwedge^h(\rn n))$, then,
by the Green formula 
$$
\int_{\rn n} \scal{d  u}{\phi}\, dx =  \int_{\rn n} \scal{u}{d ^*\phi}\, dx.
$$ 
Thus, if $T\in \mc D'(\rn n, \bigwedge^h(\rn n)$. it is natural to set 
\begin{equation*}\begin{split}
\Scal{d T}{\phi} = \Scal{T}{d ^*\phi}
\end{split}\end{equation*}
for any $\phi\in \mc D(\rn n, \bigwedge^{h+1}(\rn n))$. 

Analogously, if $T\in \mc D'(\rn n, \bigwedge^h(\rn n))$, we set
 \begin{equation*}\begin{split}
\Scal{d ^*T}{\phi} = \Scal{T}{d \phi}
\end{split}\end{equation*}
for any $\phi\in \mc D(\rn n, \bigwedge^{h-1}(\rn n))$. 

Notice that, if $u \in L^1_{\mathrm{loc}}(\rn n, \bigwedge^{h}(\rn n)) $
$$
\Scal{u}{d ^*\phi} =  \int_{\rn n} u\wedge \ast d ^*\varphi
= (-1)^{h+1}  \int_{\rn n} u\wedge d ^* (\ast\varphi).
$$

A straightforward approximation   {argument} yields the following identity:
{  \begin{lemma}\label{june 6 eq:1}
Let   
$u\in L^1(\rn n, \bigwedge^{h+1}(\rn n))$ be a closed form, and let $K$ be a kernel of type $\mu\in (0,n)$. 
If $\psi\in \mc D(\rn n, \Omega^h)$, then 
\begin{equation}\label{closed eq:1}
\int \scal{u}{ d^*(\psi \ast K)}\, dx=0.
\end{equation}
\end{lemma}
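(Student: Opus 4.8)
The plan is to reduce the identity \eqref{closed eq:1} to the defining property $\langle u, d^*\phi\rangle = \langle du, \phi\rangle = 0$, which holds for any test form $\phi$ since $u$ is closed. The obstruction is that $\psi\ast K$ is \emph{not} compactly supported — by the Lemma following Proposition \ref{kernel}, $\psi\ast K$ is smooth with all derivatives bounded, and in fact decays like $|p|^{\mu-n-\ell}$ for the $\ell$-th derivatives, but it does not vanish at infinity fast enough to be a test form outright. So the integral $\int \langle u, d^*(\psi\ast K)\rangle\,dx$ makes sense (as the pairing of the $L^1$ form $u$ with the bounded form $d^*(\psi\ast K)$), but we cannot immediately invoke the closedness of $u$ against it.

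The approach is a cutoff-and-approximation argument. First I would introduce a family of cutoffs $\chi_R\in\mc D(\rn n)$ with $\chi_R\equiv 1$ on $B(0,R)$, $\supp\chi_R\subset B(0,2R)$, and $|\nabla\chi_R|\le C/R$. Then $\chi_R\,(\psi\ast K)$ is a genuine test form, so
\begin{equation*}
\int \scal{u}{d^*(\chi_R(\psi\ast K))}\,dx = \int\scal{du}{\chi_R(\psi\ast K)}\,dx = 0
\end{equation*}
by closedness of $u$. Next I would expand $d^*(\chi_R(\psi\ast K))$ using the Leibniz rule for the codifferential: it equals $\chi_R\,d^*(\psi\ast K)$ plus a term involving $\nabla\chi_R$ contracted with $\psi\ast K$, the latter being supported in the annulus $B(0,2R)\setminus B(0,R)$ and bounded there by $(C/R)\,|\psi\ast K|$. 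Thus
\begin{equation*}
\int\scal{u}{\chi_R\,d^*(\psi\ast K)}\,dx = -\int_{B(0,2R)\setminus B(0,R)}\scal{u}{(\text{error term})}\,dx,
\end{equation*}
and the right-hand side is bounded by $\tfrac{C}{R}\int_{B(0,2R)\setminus B(0,R)}|u|\,|\psi\ast K|\,dx$.

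The main obstacle is showing this error term vanishes as $R\to\infty$. Here I would split into two regimes according to whether $\psi\ast K$ or rather its decay matters. On one hand $|\psi\ast K|$ is globally bounded, so the error is at most $\tfrac{C}{R}\int_{|x|>R}|u|\,dx$, which tends to $0$ by $u\in L^1$; but this only gives $O(1/R)$ against a tail that already vanishes, so even the crude bound suffices — one does not need the fine decay estimate $|\psi\ast K(x)|=O(|x|^{\mu-n})$ at all, just boundedness of $\psi\ast K$ together with $R^{-1}\int_{|x|>R}|u|\,dx\to 0$. Finally, letting $R\to\infty$ in $\int\scal{u}{\chi_R\,d^*(\psi\ast K)}\,dx = o(1)$ and using dominated convergence on the left (the integrand is dominated by $|u|\,\|d^*(\psi\ast K)\|_\infty\in L^1$, and $\chi_R\to 1$ pointwise) yields $\int\scal{u}{d^*(\psi\ast K)}\,dx = 0$, which is \eqref{closed eq:1}. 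The phrase ``straightforward approximation argument'' in the statement is exactly this cutoff limit; the only point requiring a word of care is the Leibniz expansion of $d^*$ and the observation that the error is supported on the annulus.
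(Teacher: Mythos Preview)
Your argument is correct and is precisely the kind of ``straightforward approximation argument'' the paper alludes to without spelling out: the paper gives no explicit proof, only that one-line pointer, and your cutoff of $\psi\ast K$ followed by the Leibniz expansion of $d^*$ and the annular error estimate is the natural way to fill it in. The only detail worth recording explicitly is the formula $d^*(\chi_R\beta)=\chi_R\,d^*\beta-\iota_{\nabla\chi_R}\beta$, which makes your ``error term involving $\nabla\chi_R$ contracted with $\psi\ast K$'' precise; with that in hand everything goes through exactly as you say.
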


}

 \begin{definition}
In $\rn n$,  we define
the Laplace-Beltrami operator $\Delta_{h}$  on $\Omega^h$   {by} 
\begin{equation*}
\Delta_{h}=
     dd^*+d^*d
     \end{equation*}
\end{definition}

Notice that $-\Delta_{0} = \sum_{j=1}^{2n}\partial_j^2$ is the usual Laplacian of
$\rn n$. 
\begin{proposition}[see e.g. \cite{jost} (2.1.28)]
If $u= \sum_{|I|=h} u_I dx^I$, then
$$
\Delta u = -  \sum_{|I|=h} (\Delta u_I) dx^I.
$$
\end{proposition}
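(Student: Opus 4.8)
The plan is to prove the slightly stronger operator identity that the Hodge Laplacian acts coefficientwise as the scalar Laplacian, namely $\Delta=-\sum_{j=1}^n\partial_j^2$ on $\bigwedge^h(\rn n)$-valued forms, from which the stated formula is immediate. Two ingredients are needed: the coordinate expressions for $d$ and $d^*$, and the Clifford (anticommutation) relation between exterior and interior multiplication by the coordinate (co)vectors.

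First I would record the two coordinate formulas. Writing $e_j=\partial/\partial x_j$ and letting $\partial_j$ act on coefficients, one has by definition $d=\sum_{j=1}^n dx_j\wedge\partial_j$, and then, integrating by parts,
\begin{equation*}\begin{split}
\int\scal{\omega}{d\phi}\,dx &=\sum_{j=1}^n\int\scal{\omega}{dx_j\wedge\partial_j\phi}\,dx
=\sum_{j=1}^n\int\scal{\iota_{e_j}\omega}{\partial_j\phi}\,dx
\\ &=-\sum_{j=1}^n\int\scal{\iota_{e_j}\partial_j\omega}{\phi}\,dx,
\end{split}\end{equation*}
where I used that $\iota_{e_j}$ is the adjoint of $dx_j\wedge$ for the inner product $\scalp{\cdot}{\cdot}{}$ on $\bigwedge^\bullet(\rn n)$. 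Hence $d^*=-\sum_{j=1}^n\iota_{e_j}\partial_j$. Since $\bigwedge^h(\rn n)$ is a trivial bundle with constant coframe $(dx_1,\dots,dx_n)$, each $\partial_j$ commutes with $dx_k\wedge$ and with $\iota_{e_k}$.

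Then I would compose the two expressions. Using $\partial_j\partial_k=\partial_k\partial_j$,
$$
\Delta=dd^*+d^*d=-\sum_{j,k=1}^n\bigl(dx_j\wedge\iota_{e_k}+\iota_{e_k}\,dx_j\wedge\bigr)\partial_j\partial_k,
$$
and the anticommutation identity $\iota_{e_k}(dx_j\wedge\xi)+dx_j\wedge\iota_{e_k}\xi=\scalp{dx_j}{dx_k}{}\,\xi=\delta_{jk}\,\xi$ collapses the double sum to $-\sum_{j=1}^n\partial_j^2\cdot\mathrm{Id}$. Applied to $u=\sum_{|I|=h}u_I\,dx^I$ this gives $\Delta u=-\sum_{|I|=h}\bigl(\sum_{j}\partial_j^2u_I\bigr)dx^I$, which is exactly the asserted identity, with $\Delta$ on the right-hand side read as the scalar operator $\sum_j\partial_j^2=-\Delta_0$, i.e.\ the usual Laplacian of $\rn n$ of the remark preceding the statement. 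The same computation is valid for $u\in L^1_{\mathrm{loc}}$ in the sense of currents, since $d$ and $d^*$ on currents are defined by the very duality formula above.

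There is no real obstacle here: the only point requiring care is the verification that $\iota_{e_j}$ is indeed the metric-adjoint of $dx_j\wedge$ on the exterior algebra (a standard linear-algebra fact), after which the argument is two lines of Clifford algebra. Alternatively one could quote the Weitzenb\"ock formula, by which $\Delta$ differs from the connection Laplacian $\nabla^*\nabla$ only by a curvature term: on flat $\rn n$ that term vanishes and $\nabla$ is the trivial connection in the coordinate coframe, so again $\Delta=\nabla^*\nabla=-\sum_j\partial_j^2$; but I would prefer the self-contained computation above to invoking Weitzenb\"ock.
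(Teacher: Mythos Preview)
Your argument is correct: the coordinate expressions $d=\sum_j dx_j\wedge\partial_j$ and $d^*=-\sum_j\iota_{e_j}\partial_j$, together with the anticommutation relation $\iota_{e_k}(dx_j\wedge\xi)+dx_j\wedge(\iota_{e_k}\xi)=\delta_{jk}\xi$, give the identity immediately. The paper itself offers no proof at all---it merely cites \cite{jost}, formula (2.1.28)---so your self-contained computation (essentially the one Jost gives) goes beyond what the paper does, and the Weitzenb\"ock alternative you mention is also fine.
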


 For sake of simplicity, since a basis  of $\bigwedge^h(\rn n)$
is fixed, the operator $\Delta_{h}$ can be identified with a diagonal matrix-valued map, still denoted
by $\Delta_{h}$,
\begin{equation}\label{matrix form}
\Delta_{h} = -(\delta_{ij}\Delta)_{i,j=1,\dots,\mathrm{dim}\, \bigwedge^h(\rn n)}: \mc D'(\rn{n}, \bigwedge^h(\rn n))\to D'(\rn{n}, \bigwedge^h(\rn n)),
\end{equation}
where $D'(\rn{n}, \bigwedge^h(\rn n))$ is the space of vector-valued distributions on $\rn n$.

If we denote by $\Delta^{-1}$ the matrix valued kernel
\begin{equation}\label{matrix form inverse}
\Delta_{h}^{-1} = -(\delta_{ij}\Delta^{-1})_{i,j=1,\dots,\mathrm{dim}\, \bigwedge^h(\rn n)}: \mc D'(\rn{n}, \bigwedge^h(\rn n))\to D'(\rn{n}, \bigwedge^h(\rn n)),
\end{equation}
then $\Delta_{h}^{-1} $ is a matrix-valued kernel of type 2 and
$$
\Delta_{h}^{-1} \Delta_{h}\alpha = \Delta_{h}\Delta_{h}^{-1} \alpha = \alpha \qquad\mbox{for all $\alpha\in\mc D(\rn n,\bigwedge^h(\rn n)$.}
$$

 We notice that, if $n>1$, since $ \Delta_h^{-1}$ is associated with a kernel of type 2 
 $ \Delta_h^{-1}f $ is well defined when  $f\in L^1(\he{n}, E_0^h)$. More precisely,  by 
Lemma \ref{convolutions} we have:

\begin{lemma}

 If $1\le h< n$, and $R=R(D)$ is   {a} homogeneous
 polynomial of degree $\ell=1$ in $D_1,\dots,D_n$,
 we have:
 $$
 \| f\ast R(D) \Delta_{h}^{-1}\|_{M^{n/(n-1)}} \le C\|f \|_{L^1(\rn n)}
   $$
   for all $f\in L^1(\rn n, \bigwedge^h(\rn n))$.

By Corollary \ref{marc alternative coroll}, in both cases $ f\ast R(D) \Delta_{h}^{-1}\in L^1_{\mathrm{loc}}(\rn n,\bigwedge^h(\rn n))$.
In particular, the map
\begin{equation}\label{L1-L1}
\Delta_{ h}^{-1}: L^1(\rn n, \bigwedge^h(\rn n))\longrightarrow L^1_{\mathrm{loc}}(\rn n, \bigwedge^h(\rn n))
\end{equation}
is continuous.
\end{lemma}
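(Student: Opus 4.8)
The plan is to deduce the estimate from Lemma~\ref{convolutions} once we recognize $R(D)\Delta_h^{-1}$ as convolution with a matrix-valued kernel of type $1$. Since $n>2$, the scalar kernel $\Delta^{-1}$ is a kernel of type $2\in(0,n)$, and by \eqref{matrix form inverse} the operator $\Delta_h^{-1}$ is convolution with the matrix whose $(i,j)$-entry is $-\delta_{ij}\Delta^{-1}$. Writing $R(D)=\sum_{j=1}^n a_j D_j$ and applying Proposition~\ref{kernel}~ii) to each entry, we see that $R(D)\Delta_h^{-1}$ is convolution with a matrix $E=(E_{IJ})$, indexed by the finitely many multi-indices $I,J$ with $|I|=|J|=h$, each entry $E_{IJ}$ being a kernel of type $2-1=1$; note $1\in(0,n)$ because $n>2$.

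First I would run the scalar estimate componentwise. Writing $f=\sum_{|J|=h}f_J\,dx^J$ with $f_J\in L^1(\rn n)$, Lemma~\ref{convolutions} applied with $\alpha=1$ gives $f_J\ast E_{IJ}\in M^{n/(n-1)}$ and $\|f_J\ast E_{IJ}\|_{M^{n/(n-1)}}\le C\|f_J\|_{L^1(\rn n)}$ for every $I,J$. Since the $I$-component of $f\ast R(D)\Delta_h^{-1}$ is $\sum_J f_J\ast E_{IJ}$, summing over the (finitely many) multi-indices and using that $\|\cdot\|_{M^{n/(n-1)}}$ is subadditive (immediate from Definition~\ref{M}) yields
$$
\|f\ast R(D)\Delta_h^{-1}\|_{M^{n/(n-1)}}\le C\,\|f\|_{L^1(\rn n)}.
$$
The inclusion $f\ast R(D)\Delta_h^{-1}\in L^1_{\mathrm{loc}}(\rn n,\bigwedge^h(\rn n))$ is then Corollary~\ref{marc alternative coroll} with $s=1<n/(n-1)=p$.

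For the continuity of the map \eqref{L1-L1}, I would repeat the argument with $R(D)$ replaced by the identity, i.e.\ with $\alpha=2$: since $n>2$ the matrix kernel of $\Delta_h^{-1}$ has type $2\in(0,n)$, so Lemma~\ref{convolutions} and the same componentwise summation give $\Delta_h^{-1}f\in M^{n/(n-2)}$ with $\|\Delta_h^{-1}f\|_{M^{n/(n-2)}}\le C\|f\|_{L^1(\rn n)}$. By Definition~\ref{M}, for every bounded measurable $K\subset\rn n$,
$$
\int_K|\Delta_h^{-1}f|\,dx\le \|\Delta_h^{-1}f\|_{M^{n/(n-2)}}\,|K|^{(n-2)/n}\le C\,|K|^{(n-2)/n}\,\|f\|_{L^1(\rn n)},
$$
which shows both that $\Delta_h^{-1}f\in L^1_{\mathrm{loc}}$ and that $f\mapsto\Delta_h^{-1}f$ is continuous from $L^1(\rn n,\bigwedge^h(\rn n))$ to $L^1_{\mathrm{loc}}(\rn n,\bigwedge^h(\rn n))$.

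There is no essential difficulty here: everything is a direct application of Proposition~\ref{kernel}, Lemma~\ref{convolutions} and Corollary~\ref{marc alternative coroll}. The only points that deserve care are the passage from the scalar statements to the matrix-valued setting, which is harmless because $\dim\bigwedge^h(\rn n)<\infty$ and $\|\cdot\|_{M^p}$ obeys the triangle inequality, and the bookkeeping of the homogeneity degrees, where the standing hypothesis $n>2$ is exactly what guarantees that the relevant types $1$ and $2$ both lie in the admissible range $(0,n)$.
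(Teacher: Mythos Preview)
Your proof is correct and is exactly the argument the paper has in mind: the lemma is stated immediately after the sentence ``More precisely, by Lemma~\ref{convolutions} we have:'' and carries no separate proof, so the intended justification is precisely the componentwise application of Proposition~\ref{kernel}~ii) and Lemma~\ref{convolutions} that you spell out. Your remarks on the triangle inequality for $\|\cdot\|_{M^p}$ and on the role of the hypothesis $n>2$ are appropriate and fill in the only details the paper leaves implicit.
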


\begin{remark}\label{closed bis} By Lemma \ref{closed ex bis}, if
$u\in L^1(\rn n, \bigwedge^{h+1}(\rn n))$ and
$\psi\in \mc D(\rn n, \bigwedge^h(\rn n))$, then 
\begin{equation}\label{revised 1}
\Scal{\Delta^{-1}_{h} u}{ \psi} = \Scal{ u}{ \Delta^{-1}_{h}\psi}.
\end{equation}
\end{remark}
In this equation, the left hand side is the action of a matrix-valued distribution on a vector-valued test function, see formula \eqref{matrix form inverse}, 
whereasthe right hand side is the inner product of an $L^1$ vector-valued function with an $L^\infty$ vector-valued function.

A standard argument yields the following identities:

\begin{lemma}[see \cite{BFP2}, Lemma 4.11]\label{comm} 
If $\alpha\in\mc D(\rn n, \bigwedge^h(\rn n))$, then
\begin{itemize}
\item[i)]$
d \Delta^{-1}_{ h}\alpha = \Delta^{-1}_{ h+1} d\alpha$, \qquad $h=0,1,\dots, n-1$, 

\item[iv)]$d^*  \Delta^{-1}_{\mathbb H, h}\alpha = \Delta^{-1}_{\mathbb H, h-1} d^* \alpha$
 \qquad $ h=1,\dots, n$.

\end{itemize}
\end{lemma}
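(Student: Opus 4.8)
The plan is to reduce both identities to the elementary fact that, componentwise, $\Delta_h^{-1}$ is nothing but convolution with one fixed scalar kernel $K$ of type $2$ (the inverse of the scalar Laplacian, see \eqref{matrix form inverse}), and that convolution with $K$ commutes with the constant‑coefficient first‑order operators $d$ and $d^*$. Concretely I would write $d=\sum_{j=1}^n dx_j\wedge D_j$ and $d^*=-\sum_{j=1}^n\iota_{e_j}D_j$, where $\iota_{e_j}$ is contraction with the $j$‑th coordinate vector; here $dx_j\wedge(\cdot)$ and $\iota_{e_j}(\cdot)$ are translation‑invariant algebraic operators on the (finite–dimensional) fibres, so they commute with any convolution operator, while each $D_j$ commutes with convolution by $K$.

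First I would record the two auxiliary facts. (a) For $\alpha\in\mc D(\rn n,\bigwedge^h(\rn n))$ every coefficient $\alpha_I$ is compactly supported, so associativity/commutativity of convolution gives $D_j(K\ast\alpha_I)=(D_jK)\ast\alpha_I=K\ast(D_j\alpha_I)$ in $\mc D'(\rn n)$; in particular $D_j\Delta_h^{-1}\alpha=\Delta_h^{-1}D_j\alpha$. (b) By the lemma on $\psi\ast g$, $\Delta_h^{-1}\alpha$ is actually smooth with $\Delta_h^{-1}\alpha=O(|p|^{2-n})$ and $D_j\Delta_h^{-1}\alpha=O(|p|^{1-n})$ as $p\to\infty$, so the distributional exterior derivative of $\Delta_h^{-1}\alpha$ is the classical one and there are no boundary contributions. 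Note also that, since $\Delta_{h+1}^{-1}$ and $\Delta_h^{-1}$ act by the same componentwise convolution with $K$, for every $\beta\in\mc D$ one has $\Delta_{h+1}^{-1}(dx_j\wedge\beta)=dx_j\wedge\Delta_h^{-1}\beta$ and $\Delta_{h-1}^{-1}(\iota_{e_j}\beta)=\iota_{e_j}\Delta_h^{-1}\beta$, because $dx_j\wedge(\cdot)$ and $\iota_{e_j}(\cdot)$ are fixed linear maps on the fibres.

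With this in hand, (i) is the short computation
$$
d\,\Delta_h^{-1}\alpha=\sum_j dx_j\wedge D_j\Delta_h^{-1}\alpha=\sum_j dx_j\wedge \Delta_h^{-1}D_j\alpha=\Delta_{h+1}^{-1}\Big(\sum_j dx_j\wedge D_j\alpha\Big)=\Delta_{h+1}^{-1}d\alpha,
$$
where the second equality is (a), the third uses the boxed commutation with $dx_j\wedge(\cdot)$ applied to $D_j\alpha\in\mc D(\rn n,\bigwedge^{h+1}(\rn n))$, and the last is the definition of $d$. Item (iv), namely $d^*\Delta_h^{-1}\alpha=\Delta_{h-1}^{-1}d^*\alpha$, is proved identically, replacing $dx_j\wedge(\cdot)$ by $-\iota_{e_j}(\cdot)$ and $\Delta_{h+1}^{-1}$ by $\Delta_{h-1}^{-1}$. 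As a cross‑check one may instead apply $\Delta_{h+1}$ to $d\Delta_h^{-1}\alpha-\Delta_{h+1}^{-1}d\alpha$: since constant‑coefficient operators commute with $\Delta$ and $\Delta_h\Delta_h^{-1}\alpha=\alpha$ on $\mc D$, one gets $\Delta_{h+1}(d\Delta_h^{-1}\alpha)=d\alpha=\Delta_{h+1}(\Delta_{h+1}^{-1}d\alpha)$, so the difference is a componentwise harmonic smooth form which is $O(|p|^{2-n})\to0$ at infinity (using $n>2$), hence identically zero by Liouville.

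The main obstacle here is not depth but bookkeeping: one has to be careful that $\Delta_h^{-1}\alpha$ is only locally integrable, not compactly supported, so that the commutation $D_j\Delta_h^{-1}\alpha=\Delta_h^{-1}D_j\alpha$ must be justified through the compact support of $\alpha$ (Corollary \ref{closed ex bis} and the Remark following it) rather than by symmetry, and that it is precisely the decay/$L^\infty$ estimates from the earlier lemmas that make all the distributional manipulations legitimate. Once these points are in place, both identities follow from the displayed computation.
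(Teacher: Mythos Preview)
Your argument is correct and is precisely the ``standard argument'' the paper alludes to (the paper gives no proof, deferring instead to \cite{BFP2}): since $\Delta_h^{-1}$ acts componentwise by convolution with the single scalar kernel $K$, while $d$ and $d^*$ are constant-coefficient first-order operators, commutation follows from $D_j(K\ast\alpha_I)=K\ast(D_j\alpha_I)$ on $\mc D$ together with the fact that the fibrewise maps $dx_j\wedge(\cdot)$ and $\iota_{e_j}(\cdot)$ commute with any convolution. One small slip: in the line explaining the third equality you write $D_j\alpha\in\mc D(\rn n,\bigwedge^{h+1}(\rn n))$, but of course $D_j\alpha\in\mc D(\rn n,\bigwedge^{h}(\rn n))$; this is harmless since your commutation identity $\Delta_{h+1}^{-1}(dx_j\wedge\beta)=dx_j\wedge\Delta_h^{-1}\beta$ is stated for $\beta$ of degree $h$ anyway.
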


\begin{lemma}
 If $\alpha\in L^1(\rn n, \bigwedge^h(\rn n))$, then $\Delta^{-1}_{h}\alpha$
 is well defined and belongs to 
$ L^1_{\mathrm{loc}}(\rn n, \bigwedge^h(\rn n))$. If in addition $d \alpha=0$ in the distributional sense,
then the following result holds:
$$
d  \Delta^{-1}_{ h}\alpha = 0.
$$
 \end{lemma}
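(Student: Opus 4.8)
The plan is to pass from the smooth case, where Lemma \ref{comm}(i) applies directly, to the $L^1$ case by approximation, using the continuity of $\Delta_h^{-1}$ from $L^1$ into $L^1_{\mathrm{loc}}$ established above. The first sentence of the statement is immediate: since $\alpha\in L^1(\rn n,\bigwedge^h(\rn n))$ and $\Delta_h^{-1}$ is a matrix-valued kernel of type $2\in(0,n)$, Lemma \ref{convolutions} (or its specialization quoted just before this lemma) shows $\Delta_h^{-1}\alpha$ is well defined and lies in $M^{n/(n-2)}\subset L^1_{\mathrm{loc}}$. So the content is the identity $d\,\Delta_h^{-1}\alpha=0$ under the hypothesis $d\alpha=0$.

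The cleanest route is to test against forms. Fix $\psi\in\mc D(\rn n,\bigwedge^{h+1}(\rn n))$; I want to show $\Scal{d\,\Delta_h^{-1}\alpha}{\psi}=0$, i.e. $\Scal{\Delta_h^{-1}\alpha}{d^*\psi}=0$ by definition of the distributional $d$. Now $d^*\psi\in\mc D(\rn n,\bigwedge^h(\rn n))$ is a test form, so by Remark \ref{closed bis} (formula \eqref{revised 1}) we may move $\Delta_h^{-1}$ onto the test side:
\begin{equation*}
\Scal{\Delta_h^{-1}\alpha}{d^*\psi}=\Scal{\alpha}{\Delta_h^{-1}d^*\psi}.
\end{equation*}
By Lemma \ref{comm}(iv) (the commutation $d^*\Delta_h^{-1}=\Delta_{h-1}^{-1}d^*$ applied to the test form $\psi$), we can rewrite $\Delta_h^{-1}d^*\psi=d^*\Delta_{h+1}^{-1}\psi$. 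Hence
\begin{equation*}
\Scal{\Delta_h^{-1}\alpha}{d^*\psi}=\Scal{\alpha}{d^*(\Delta_{h+1}^{-1}\psi)}.
\end{equation*}
The form $\Delta_{h+1}^{-1}\psi$ is smooth but no longer compactly supported; however, since $\Delta_{h+1}^{-1}$ is a kernel of type $2<n$ convolved with a test form, the Lemma on $\psi\ast g$ (the one giving $R(\psi\ast g)(p)=O(|p|^{\mu-n-\ell})$) guarantees that $\Delta_{h+1}^{-1}\psi$ and all its derivatives are in $L^\infty$ with appropriate decay. This is exactly the situation covered by Lemma \ref{june 6 eq:1}: for a closed $u=\alpha\in L^1(\rn n,\bigwedge^{h+1}(\rn n))$ and a kernel $K$ of type $\mu\in(0,n)$, one has $\int\scal{\alpha}{d^*(\psi'\ast K)}\,dx=0$ for $\psi'\in\mc D$. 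Taking $K=\Delta^{-1}$ (scalar, type $2$) and $\psi'=\psi$ componentwise, we get $\Scal{\alpha}{d^*(\Delta_{h+1}^{-1}\psi)}=0$, and chaining the equalities gives $\Scal{d\,\Delta_h^{-1}\alpha}{\psi}=0$ for all test $\psi$, i.e. $d\,\Delta_h^{-1}\alpha=0$.

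The main obstacle is the justification of the integration-by-parts / density step hidden in Lemma \ref{june 6 eq:1} and in the use of Remark \ref{closed bis}: the form $\Delta_{h+1}^{-1}\psi$ is not a test form, so the pairing $\Scal{\alpha}{d^*(\Delta_{h+1}^{-1}\psi)}$ must be read as the $L^1$–$L^\infty$ pairing, and the identity $\int\scal{\alpha}{d^*(\Delta_{h+1}^{-1}\psi)}\,dx=0$ requires approximating $\alpha$ (in $L^1$) by smooth closed compactly supported forms, or equivalently truncating $\Delta_{h+1}^{-1}\psi$ and controlling the error using the decay $O(|p|^{2-n-\ell})$ of its derivatives against $\|\alpha\|_{L^1}$. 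Since this is precisely what Lemma \ref{june 6 eq:1} (and the ``straightforward approximation argument'' preceding it) asserts, and we are entitled to invoke it, the argument closes; one only needs to check that the hypotheses of Lemma \ref{june 6 eq:1} — closedness of $\alpha$ and $K$ of type in $(0,n)$ — are met, which they are with $K=\Delta^{-1}$ of type $2$ and $n>2$.
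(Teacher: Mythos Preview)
Your argument is correct and follows essentially the same route as the paper: test against $\psi\in\mc D(\rn n,\bigwedge^{h+1}(\rn n))$, move $\Delta_h^{-1}$ onto the test side via Remark \ref{closed bis}, commute it with the codifferential using Lemma \ref{comm}, and conclude with Lemma \ref{june 6 eq:1}. Your write-up is in fact more careful about indices and about the $L^1$--$L^\infty$ nature of the final pairing than the paper's compressed display.
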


\begin{proof} Let $\phi\in \mc D(\rn n, \bigwedge^h(\rn n))$ be arbitrarily given.
By Lemma \ref{comm}, $d  \Delta^{-1}_{h}\phi =  \Delta^{-1}_{h}d \phi$.
Thus Remark \ref{closed bis} and Lemma \ref{june 6 eq:1} yield
$$
\Scal{d  \Delta^{-1}_{ h}\alpha}{\phi} = \Scal{\Delta^{-1}_{ h}\alpha}{d \phi}
= \Scal{\alpha}{\Delta^{-1}_{h}d \phi} =
\Scal{\alpha}{d  \Delta^{-1}_{ h}\phi} 
 = 0.
$$
\end{proof}

\section{$n$-parabolicity}
\label{parabolicity}

Recall that a noncompact Riemannian manifold $M$ is \emph{$p$-parabolic} if for every compact subset $K$ and every $\eps>0$, there exists a smooth compactly supported function $\chi$ on $M$ such that $\chi\geq 1$ on $K$ and 
$$
\int_{M}|d\chi|^p <\eps.
$$
It is well known that Euclidean $n$-space is $n$-parabolic (the relevant functions $\chi$ can be taken to be piecewise affine functions of $\log r$, where $r$ is the distance to the origin). It follows that Sobolev inequality in $L^n$ cannot hold, and, as we saw in the introduction, that the Poincar\'e inequality on $n$-forms fails as well.

Here, we explain an other consequence of $n$-parabolicity.

\begin{proposition}
\label{descends}
Let $\omega$ be a $k$-form in $L^1(\R^n)$. Assume that $\omega=d\phi$ where $\phi\in L^{n/(n-1)}(\R^n)$. Then, for every constant coefficient $n-k$-form $\beta$,
$$
\int_{\R^n}\omega\wedge\beta=0.
$$
\end{proposition}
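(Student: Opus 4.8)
The plan is to exploit $n$-parabolicity to justify an integration by parts that, formally, would give $\int_{\R^n}\omega\wedge\beta=\int_{\R^n}d\phi\wedge\beta=\pm\int_{\R^n}\phi\wedge d\beta=0$ since $\beta$ has constant coefficients and hence $d\beta=0$. The only difficulty is that neither $\phi$ nor $\omega$ is compactly supported, so the boundary term in Stokes' theorem must be shown to vanish; this is exactly where the parabolicity cutoffs enter.

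First I would fix, for each $\eps>0$, a smooth cutoff $\chi=\chi_R$ which equals $1$ on the ball $B(0,R)$, is supported in $B(0,2R)$, and satisfies $\int_{\R^n}|d\chi|^n\to 0$ as $R\to\infty$; concretely one takes $\chi_R$ to be a fixed function of $\log(|x|/R)$, so that $|d\chi_R|\lesssim 1/(|x|\log 2)$ on the annulus $R\le |x|\le 2R$ and $\int |d\chi_R|^n$ is a constant independent of $R$ — in fact by refining the profile (using $\log$ of $\log$, or simply a longer dyadic range $R\le|x|\le R^2$) one arranges $\int|d\chi_R|^n\to 0$. Then I apply Stokes' theorem to the compactly supported form $\chi_R\,\phi\wedge\beta$: since $d(\chi_R\phi\wedge\beta)=d\chi_R\wedge\phi\wedge\beta+\chi_R\,d\phi\wedge\beta$ (using $d\beta=0$), and the integral of an exact compactly supported form vanishes, we get
\begin{equation*}
\int_{\R^n}\chi_R\,\omega\wedge\beta=-\int_{\R^n}d\chi_R\wedge\phi\wedge\beta.
\end{equation*}
This identity should first be established for $\phi\in\mc D(\rn n)$ and then extended to $\phi\in L^{n/(n-1)}$ with $d\phi=\omega\in L^1$ by a routine mollification argument.

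The left-hand side converges to $\int_{\R^n}\omega\wedge\beta$ by dominated convergence, since $\omega\wedge\beta\in L^1$ and $0\le\chi_R\le 1$, $\chi_R\to 1$ pointwise. For the right-hand side, $\beta$ is bounded and $d\chi_R$ is supported in the annulus $A_R:=\{R\le|x|\le 2R\}$, so by Hölder with exponents $n$ and $n/(n-1)$,
\begin{equation*}
\Big|\int_{\R^n}d\chi_R\wedge\phi\wedge\beta\Big|\le C\,\|d\chi_R\|_{L^n(\R^n)}\,\|\phi\|_{L^{n/(n-1)}(A_R)}.
\end{equation*}
Here $\|d\chi_R\|_{L^n}\to 0$ by the choice of cutoff, while $\|\phi\|_{L^{n/(n-1)}(A_R)}\to 0$ because $\phi\in L^{n/(n-1)}(\R^n)$ and $|A_R|$ exhausts a neighborhood of infinity; even if one only has $\|d\chi_R\|_{L^n}$ bounded, the second factor alone tends to zero, so the product vanishes. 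Hence the right-hand side tends to $0$, and we conclude $\int_{\R^n}\omega\wedge\beta=0$.

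**Main obstacle.** The only genuinely delicate point is the justification of the Stokes identity at the level of $L^1$–$L^{n/(n-1)}$ regularity: one must approximate $\phi$ by smooth forms $\phi_j$ so that simultaneously $\phi_j\to\phi$ in $L^{n/(n-1)}$ and $d\phi_j\to\omega$ in $L^1$, which is standard via convolution since $d$ commutes with mollification, but it should be written out. Everything else is a direct application of $n$-parabolicity together with Hölder's inequality and dominated convergence.
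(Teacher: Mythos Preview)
Your proposal is correct and follows essentially the same route as the paper: both introduce a parabolicity cutoff $\chi_R$, apply Stokes to the compactly supported form $\chi_R\phi\wedge\beta$, send the error term $\int d\chi_R\wedge\phi\wedge\beta$ to zero via H\"older with exponents $n$ and $n/(n-1)$, and pass to the limit in $\int\chi_R\,\omega\wedge\beta$ by dominated convergence. Your additional remark that boundedness of $\|d\chi_R\|_{L^n}$ already suffices (because $\|\phi\|_{L^{n/(n-1)}(A_R)}\to 0$) is a nice simplification, and your attention to justifying the Stokes identity by mollification fills in a detail the paper leaves implicit.
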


\begin{proof}
Let $\chi_R$ be a smooth compactly supported function on $\R^n$ such that $\chi_R=1$ on $B(R)$ and $\int|d\chi_R|^n \leq \frac{1}{R}$. Let $\omega_R=d(\chi_R\phi)$. Then, since $\chi_R\phi\wedge\beta$ is compactly supported,
$$
\int_{\R^n}\omega_R\wedge\beta=\int_{\R^n}d(\chi_R\phi\wedge\beta)=0.
$$
Write $\omega_R=d\chi_R\wedge\phi+\chi_R\omega$. Since
$$
|\int_{\R^n}d\chi_R\wedge\phi\wedge\beta|
\leq \|d\chi_R\|_n\|\phi\|_{n/(n-1)}\|\beta\|_\infty
\leq\frac{C}{R^{1/n}}
$$
tends to $0$,
\begin{align*}
\int_{\R^n}\omega\wedge\beta
&=\lim_{R\to\infty}\int_{\R^n}\chi_R\omega\wedge\beta\\
&=-\lim_{R\to\infty}\int_{\R^n}\omega_R\wedge\beta=0.
\end{align*}

\end{proof}

In other words, the vanishing of all integrals $\int\omega\wedge\beta$ is a necessary condition for an $L^1$ $k$-form to be the differential of an $L^{n/(n-1)}$ $k-1$-form.

\section{Main results}

The following  estimate  provides primitives for globally defined closed $L^1$-forms, and
can be derived from Lanzani \& Stein inequality \cite{LS},   {approximating} closed forms in $L^1_0(\rn n),\bigwedge^h(\rn n))$
by means of closed compactly supported smooth form. The convergence of the approximation is guaranteed by Lemma \ref{anuli}.  
\begin{proposition}
Denote by $L^1_0(\rn n),\bigwedge^h(\rn n))$ the subspace of $L^1(\rn n,\bigwedge^h(\rn n))$
of forms with vanishing average, and by $\mc H^1(\rn n)$ the classic real Hardy space (see \cite{Stein}, Chapter 3). We have:
\begin{itemize}
\item[i)] if $h < n$, then
$$
\|d^*  \Delta_h^{-1} u\|_{L^{n/(n-1)}(\rn n)}\le C \|u\|_{L^{1}(\rn n)}\qquad\mbox{for all $u\in L^1_0(\rn n,\bigwedge^h(\rn n))\cap \ker d $};
$$
\item[ii)] if  $h= n$, then
$$
\|d^*  \Delta_{n}^{-1} u\|_{L^{n/(n-1)}(\rn n)}\le C \|u\|_{\mc H^{1}(\rn n)}\qquad\mbox{for all $u\in \mc H^1(\rn n)\cap \ker d $}
$$
\end{itemize}
We stress that the vanishing average assumption is necessary
 (see Proposition \ref{descends}). 

\end{proposition}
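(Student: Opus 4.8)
The plan is to reduce the statement to the Lanzani--Stein inequality \cite{LS}, which provides exactly the $L^1 \to L^{n/(n-1)}$ estimate for $d^*\Delta^{-1}$ applied to closed forms that are either compactly supported and smooth, or lie in the Hardy space $\mathcal H^1$. The two cases $h<n$ and $h=n$ differ only in which dense subclass one approximates with and which norm controls the error, so I would treat them in parallel.

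First, for case i), I would fix $u \in L^1_0(\rn n, \bigwedge^h(\rn n)) \cap \ker d$ and produce a sequence $u_j$ of smooth, compactly supported, closed $h$-forms with $\int_{\rn n} u_j = 0$ such that $u_j \to u$ in $L^1$. Producing such a sequence is the step requiring care: one mollifies $u$ (which preserves closedness and the vanishing-average condition, since convolution commutes with $d$ and with integration), then cuts off with a smooth bump $\eta_R$ supported in $B(0,2R)$, equal to $1$ on $B(0,R)$; the cutoff destroys closedness because $d(\eta_R u) = d\eta_R \wedge u$, but this error term is supported in the annulus $B(0,2R)\setminus B(0,R)$ and, crucially, its $L^1$ norm is controlled by $R^{-1}\int_{B(0,2R)\setminus B(0,R)} |u|\,dx$, which tends to $0$. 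To restore exact closedness and the zero-average property one corrects by a form built from $\Delta^{-1}$ applied to this small error, invoking Lemma \ref{anuli} (applied with the type-$2$ kernel, or rather its first derivatives, which are type $1$) to guarantee that the correction is small in $L^1$ as well; here the hypothesis $\int u = 0$ is what makes Lemma \ref{anuli} applicable. This yields the desired approximating sequence $u_j$.

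Next, I would apply the Lanzani--Stein inequality to each $u_j$ to get $\|d^*\Delta_h^{-1} u_j\|_{n/(n-1)} \le C\|u_j\|_1$, with $C$ independent of $j$. Since $u_j \to u$ in $L^1$, the right-hand sides converge to $C\|u\|_1$; and by Lemma \ref{convolutions} (the operator $d^*\Delta_h^{-1}$ being convolution with a type-$1$ kernel), $d^*\Delta_h^{-1}u_j \to d^*\Delta_h^{-1}u$ at least in $L^1_{\mathrm{loc}}$, hence a subsequence converges almost everywhere. Fatou's lemma then upgrades the uniform $L^{n/(n-1)}$ bound on the $u_j$ to the bound $\|d^*\Delta_h^{-1}u\|_{n/(n-1)} \le C\|u\|_1$, completing case i). For case ii), the argument is identical except that the approximating sequence must converge to $u$ in the $\mathcal H^1$ norm rather than merely in $L^1$; smooth compactly supported functions with vanishing integral are dense in $\mathcal H^1$, and the atomic/maximal characterization of $\mathcal H^1$ makes the cutoff-and-correction scheme go through with $\mathcal H^1$ norms in place of $L^1$ norms, after which Lanzani--Stein (whose natural hypothesis in top degree is precisely membership in $\mathcal H^1$) and Fatou finish the proof.

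The main obstacle I anticipate is the construction of the approximating sequence in case i) while simultaneously preserving \emph{both} closedness \emph{and} the vanishing-average constraint: the naive cutoff breaks closedness, and the Hodge-type correction that repairs it must be shown to be small, which is exactly where Lemma \ref{anuli} and the zero-average hypothesis enter in an essential way. The passage to the limit via Fatou and the continuity of $d^*\Delta_h^{-1}$ from $L^1$ into $L^1_{\mathrm{loc}}$ is then routine given the machinery assembled in Section 3.
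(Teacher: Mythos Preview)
Your proposal is correct and follows precisely the route sketched in the paper, which gives no detailed proof but only the sentence preceding the statement: reduce to Lanzani--Stein by approximating with closed, smooth, compactly supported forms, with Lemma~\ref{anuli} controlling the approximation. Your elaboration---mollify, cut off, and repair closedness via a correction involving $d^*\Delta^{-1}$, using the vanishing-average hypothesis to invoke Lemma~\ref{anuli}---is exactly the mechanism the paper has in mind; the cleanest way to phrase the correction is to set $\phi_\epsilon=d^*\Delta^{-1}u_\epsilon$ and take $u_R=d(\eta_R\phi_\epsilon)=\eta_R u_\epsilon+d\eta_R\wedge\phi_\epsilon$, so the correction term $d\eta_R\wedge\phi_\epsilon$ is manifestly compactly supported and Lemma~\ref{anuli} (with $\alpha=1$) gives its $L^1$-smallness directly.
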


A standard approximation argument (akin to that of the   {classical} Meyer \& Serrin Theorem) yields
the following density result.

 \begin{lemma}
Let $B \subset \rn n$ an open set. If $0\le h\le n$, we set
$$
(L^1\cap d ^{-1}L^1)(B,\bigwedge^h(\rn n)):=\{\alpha\in L^1(B,\bigwedge^h(\rn n))\,;\,d \alpha\in L^1(B,\bigwedge^{h+1}(\rn n))\},
$$
endowed with the graph norm. Then $C^\infty(B,\bigwedge^h(\rn n))$ is dense in $(L^1\cap d ^{-1}L^1)(B,\bigwedge^h(\rn n))$.

\end{lemma}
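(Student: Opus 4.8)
The plan is to adapt the classical Meyer--Serrin argument: combine a partition of unity subordinate to an exhaustion of $B$ by relatively compact open sets with Friedrichs mollification, exploiting the fact that convolution with a smooth kernel commutes with the exterior differential.

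Concretely, fix $\eps>0$ and $\alpha\in(L^1\cap d^{-1}L^1)(B,\bigwedge^h(\rn n))$. First I would choose a locally finite open cover $\{U_j\}_{j\in\N}$ of $B$ with each $U_j\Subset B$, together with relatively compact open sets $V_j$ satisfying $U_j\Subset V_j\Subset B$ and with $\{V_j\}$ still locally finite, and a smooth partition of unity $\{\zeta_j\}$ subordinate to $\{U_j\}$, so that $\sum_j\zeta_j\equiv 1$ on $B$ and $\supp\zeta_j\subset U_j$. Writing $\alpha=\sum_j\zeta_j\alpha$, each piece $\zeta_j\alpha$ has compact support contained in $U_j$, lies in $L^1$, and, crucially, its exterior derivative $d(\zeta_j\alpha)=d\zeta_j\wedge\alpha+\zeta_j\,d\alpha$ also lies in $L^1$, because $\zeta_j$ and $d\zeta_j$ are bounded and both $\alpha$ and $d\alpha$ belong to $L^1$.

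Next I would mollify each piece. Let $\rho_\delta$ be a standard mollifier. Since convolution with $\rho_\delta$ acts componentwise on forms by convolution with a fixed smooth kernel, it commutes with the constant-coefficient operators $D_i$, hence with $d$; thus $d\big((\zeta_j\alpha)\ast\rho_\delta\big)=\big(d(\zeta_j\alpha)\big)\ast\rho_\delta$. For $\delta$ small enough (depending on $j$) one has $\supp\big((\zeta_j\alpha)\ast\rho_\delta\big)\subset V_j$, and by $L^1$-continuity of mollification we may pick $\delta=\delta_j$ so small that
$$
\|(\zeta_j\alpha)\ast\rho_{\delta_j}-\zeta_j\alpha\|_{L^1(B)}<\eps\,2^{-j}
\quad\text{and}\quad
\|d((\zeta_j\alpha)\ast\rho_{\delta_j})-d(\zeta_j\alpha)\|_{L^1(B)}<\eps\,2^{-j}.
$$
Set $\phi:=\sum_j(\zeta_j\alpha)\ast\rho_{\delta_j}$. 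Because $\{V_j\}$ is locally finite and the $j$-th summand is supported in $V_j$, on any relatively compact subset of $B$ the sum reduces to finitely many terms, so $\phi\in C^\infty(B,\bigwedge^h(\rn n))$. Summing the estimates above, and using that $d\alpha=\sum_j d(\zeta_j\alpha)$ is likewise a locally finite sum, gives $\|\phi-\alpha\|_{L^1(B)}<\eps$ and $\|d\phi-d\alpha\|_{L^1(B)}<\eps$, which is the desired approximation in the graph norm.

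The only genuinely delicate point is the bookkeeping needed to guarantee that $\phi$ is a bona fide smooth form rather than a merely formal series: this is why the mollification radius $\delta_j$ must be shrunk not only to control the two $L^1$ errors but also to keep $\supp\big((\zeta_j\alpha)\ast\rho_{\delta_j}\big)$ inside the locally finite family $\{V_j\}$. Once local finiteness is secured, smoothness of $\phi$ and the termwise summation of the error estimates are automatic; everything else is the routine verification that componentwise mollification commutes with $d$ and converges in $L^1$.
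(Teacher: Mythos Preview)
Your proposal is correct and is precisely the Meyer--Serrin argument the paper invokes; the paper itself gives no details beyond the phrase ``a standard approximation argument (akin to that of the classical Meyer \& Serrin Theorem),'' so your write-up simply fills in what the authors leave implicit.
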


Again through an approximation argument we can prove the following two lemmata:

  \begin{lemma}\label{L1 boundedness of Kd new}

If $K=d ^*\Delta_c^{-1}$, then:
\begin{itemize}
\item $K$ is a kernel of type $1$;

\item if $\chi$ is a smooth function with compact support in $B$, then the identity
$$
\chi=d  K\chi+Kd \chi
$$
holds on the space $(L^1\cap d ^{-1}(L^1)(B,\bigwedge^\bullet(\rn n) )$.
\end{itemize}

\end{lemma}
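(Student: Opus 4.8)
The plan is to verify the two bullet points in turn, relying on the machinery already assembled in the preceding sections. For the first bullet, recall that $\Delta_c^{-1}$ denotes the matrix-valued kernel of type $2$ (the entrywise Newtonian potential, acting componentwise on $\bigwedge^\bullet(\rn n)$), so by Proposition \ref{kernel}(ii) each first-order derivative of it is a kernel of type $1$. Since $d^*$ is a constant-coefficient first-order operator (a fixed linear combination of the $D_j$ composed with the algebraic contractions $\ast$, $d$, $\ast$), the composition $K=d^*\Delta_c^{-1}$ is a finite linear combination of entries $D_j\Delta^{-1}$, hence a (matrix-valued) kernel of type $1$. Being of type $\mu=1\in(0,n)$, Proposition \ref{kernel}(iii) gives $K\in L^1_{\mathrm{loc}}$, and Lemma \ref{convolutions} together with Corollary \ref{marc alternative coroll} shows convolution by $K$ maps $L^1$ into $L^1_{\mathrm{loc}}$, which is what makes all the expressions below meaningful on $(L^1\cap d^{-1}L^1)(B,\bigwedge^\bullet(\rn n))$.

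For the homotopy identity, the strategy is to prove it first for smooth forms and then pass to the limit using the density Lemma (Meyer--Serrin type) quoted just above. For $\alpha\in\mc D(\rn n,\bigwedge^\bullet)$ one computes, using that $\Delta_c=dd^*+d^*d$ acts componentwise as $-\Delta$ and that $\Delta_c^{-1}$ commutes with $d$ and $d^*$ by Lemma \ref{comm},
\begin{align*}
dK\alpha+Kd\alpha
&=dd^*\Delta_c^{-1}\alpha+d^*\Delta_c^{-1}d\alpha\\
&=dd^*\Delta_c^{-1}\alpha+d^*d\Delta_c^{-1}\alpha
=\Delta_c\Delta_c^{-1}\alpha=\alpha.
\end{align*}
Multiplying by a fixed cutoff $\chi\in\mc D(B)$ and writing $d(\chi\alpha)=d\chi\wedge\alpha+\chi\,d\alpha$, one rearranges this into the asserted form $\chi=dK\chi+Kd\chi$ read as an identity of operators applied to $\alpha$; more precisely, for smooth $\alpha$ the form $\chi\alpha$ lies in $(L^1\cap d^{-1}L^1)(B,\bigwedge^\bullet)$ and $K(\chi\alpha)$, $dK(\chi\alpha)$, $K(d(\chi\alpha))$ are all honest $L^1_{\mathrm{loc}}$ forms, so the pointwise identity makes sense.

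Finally, to extend from smooth $\alpha$ to general $\alpha\in(L^1\cap d^{-1}L^1)(B,\bigwedge^\bullet)$, take smooth $\alpha_k\to\alpha$ in the graph norm. Then $\chi\alpha_k\to\chi\alpha$ in $L^1(\rn n)$ and $d(\chi\alpha_k)\to d(\chi\alpha)$ in $L^1(\rn n)$ (here $\chi$ has compact support, so the products stay in $L^1$ and the Leibniz rule is legitimate). By the $L^1\to L^1_{\mathrm{loc}}$ continuity of convolution with the type-$1$ kernels $K$ and $d K$ (Lemma \ref{convolutions}, applied to $K$ and to each $D_jK$, which is a kernel of type $0$; alternatively one keeps the derivative on the smooth cutoff side and only ever convolves with kernels of type $\ge 1$), each of $K(\chi\alpha_k)$, $dK(\chi\alpha_k)$, $K(d(\chi\alpha_k))$ converges in $L^1_{\mathrm{loc}}$ to the corresponding object with $\alpha_k$ replaced by $\alpha$, so the identity passes to the limit. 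The step I expect to be the only genuine subtlety is the bookkeeping in this last passage: one must make sure that the exterior derivative $d$ that appears in $dK\chi$ is never applied to a merely-$L^1_{\mathrm{loc}}$ object in a way that is not controlled — this is handled by noting $dK=dd^*\Delta_c^{-1}$ is itself a matrix of kernels of type $1$ (a second derivative of the type-$2$ kernel), so $dK\chi(\alpha)$ is directly a convolution bounded $L^1\to L^1_{\mathrm{loc}}$, not a distributional derivative of something; with that observation all three terms are manifestly continuous in $\alpha$ for the relevant topologies and the limiting argument is routine.
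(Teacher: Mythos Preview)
Your overall strategy---prove the homotopy identity for smooth forms via $\Delta_c=dd^*+d^*d$ and Lemma \ref{comm}, then pass to the limit using the Meyer--Serrin density lemma---is exactly the approximation argument the paper has in mind, and the first bullet (that $K$ is of type $1$) is handled correctly.

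There is, however, a slip in the final bookkeeping. You assert that ``$dK=dd^*\Delta_c^{-1}$ is itself a matrix of kernels of type $1$ (a second derivative of the type-$2$ kernel)''. A second derivative of a type-$2$ kernel is of type $0$, not type $1$; indeed $dd^*\Delta_c^{-1}=I-d^*d\Delta_c^{-1}$ is the identity minus a Calder\'on--Zygmund operator, and such operators do \emph{not} map $L^1$ into $L^1_{\mathrm{loc}}$ (weak-$L^1$ is not contained in $L^1_{\mathrm{loc}}$). So you cannot invoke Lemma \ref{convolutions} directly for $dK$. The fix is cheap and is implicit in your own ``alternative'': pass to the limit distributionally. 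From $K(\chi\alpha_k)\to K(\chi\alpha)$ in $L^1_{\mathrm{loc}}$ (this uses only that $K$ is of type $1$) one gets $dK(\chi\alpha_k)\to dK(\chi\alpha)$ in $\mathcal D'$, since $d$ is continuous on distributions; meanwhile $\chi\alpha_k\to\chi\alpha$ in $L^1$ and $K(d(\chi\alpha_k))\to K(d(\chi\alpha))$ in $L^1_{\mathrm{loc}}$, so the identity $\chi\alpha_k=dK(\chi\alpha_k)+K(d(\chi\alpha_k))$ survives in the limit. Equivalently, pair with a test form and move $d$ onto the test side. With this correction the proof goes through.
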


  \begin{lemma}
If $1\le h<n$, let
  $\psi\in  L^1(\rn n,\bigwedge^{h}(\rn n))$ be a compactly supported form with $d \psi \in L^1(\rn n,\bigwedge^{h+1}(\rn n))$,
  and let $\xi\in   {\bigwedge^{2n-h}}$ be a   {constant coefficient} form. Then
  $$
  \int_{\rn{n} }  d \psi\wedge \xi = 0.
  $$
\end{lemma}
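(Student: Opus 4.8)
The plan is to reduce the statement to the already-established integral obstruction in Proposition \ref{descends} by an approximation argument. Write $\omega := d\psi$, which lies in $L^1(\rn n, \bigwedge^{h+1}(\rn n))$ by hypothesis and is a closed form (since $d^2=0$ in the distributional sense), and is compactly supported because $\psi$ is. The quantity to be shown to vanish is $\int_{\rn n} \omega \wedge \xi$ for a constant coefficient $(2n-h)$-form $\xi$; note that since we are in $\rn n$ this should read $(n-h-1)$-form, matching the complementary degree $n - (h+1)$, so that the wedge is a top form and the integral makes sense.

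First I would handle the smooth case: if $\psi$ were in addition smooth (hence in $\mc C^\infty_c$), then $\psi \wedge \xi$ is a compactly supported smooth $(n-1)$-form, and by Stokes' theorem $\int_{\rn n} d(\psi \wedge \xi) = 0$. Since $\xi$ has constant coefficients, $d\xi = 0$, so $d(\psi\wedge\xi) = d\psi \wedge \xi = \omega \wedge \xi$, giving the conclusion immediately.

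Second, I would remove the smoothness hypothesis by the density Lemma stated just above (the Meyer--Serrin type result): approximate $\psi$ in the graph norm of $(L^1 \cap d^{-1}L^1)$ by smooth forms $\psi_k$. Here one must be slightly careful about supports: the density lemma as stated gives $\mc C^\infty$ density, not $\mc C^\infty_c$ density on all of $\rn n$, so I would instead fix a bounded open set $B$ containing $\supp \psi$ together with a small neighborhood, apply the density lemma on $B$, and multiply the approximants by a fixed cutoff $\chi \in \mc D(B)$ with $\chi \equiv 1$ on $\supp\psi$; then $\chi\psi_k \to \chi\psi = \psi$ in graph norm while each $\chi\psi_k \in \mc D(\rn n, \bigwedge^h(\rn n))$. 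For each $k$ the smooth case gives $\int_{\rn n} d(\chi\psi_k)\wedge \xi = 0$, and since $d(\chi\psi_k) \to d\psi = \omega$ in $L^1$ and $\xi$ is bounded, passing to the limit yields $\int_{\rn n}\omega\wedge\xi = 0$.

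The main obstacle is purely bookkeeping rather than conceptual: making sure the approximation is compatible with a fixed compact support (the cutoff trick above), and confirming the convergence $d(\chi\psi_k)\to d\psi$ in $L^1$, which follows from $d(\chi\psi_k) = d\chi\wedge\psi_k + \chi\, d\psi_k$, the first term converging because $\psi_k \to \psi$ in $L^1$ and the second because $d\psi_k \to d\psi$ in $L^1$. Alternatively, and perhaps more cleanly, one can invoke Proposition \ref{descends} directly once a primitive is available: but since here we only assume $\psi \in L^1$ (not $L^{n/(n-1)}$), the direct Stokes/approximation route above is the honest one, and it is essentially the same computation as in the proof of Proposition \ref{descends} with the roles of $\chi_R$ replaced by a fixed cutoff.
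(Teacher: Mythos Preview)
Your argument is correct and matches the paper's approach, which is indicated only as ``an approximation argument'' without further detail; you also correctly identified that the degree $2n-h$ for $\xi$ is a typo (a leftover from the Heisenberg setting) and should be $n-h-1$. One small simplification: since $\psi$ is already compactly supported, a direct mollification $\psi_\epsilon=\psi\ast\rho_\epsilon\in\mc D$ satisfies $d\psi_\epsilon=(d\psi)\ast\rho_\epsilon\to d\psi$ in $L^1$, so the Meyer--Serrin lemma and the auxiliary cutoff can be bypassed entirely.
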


We are able now to prove the following (approximate) homotopy formula for closed forms.

\begin{proposition}\label{smoothing}
Let $B\Subset B'$ be open sets in $\rn n$. For $h=1,\ldots,n-1$, take $q=n/(n-1)$. Then there exists a smoothing operator 
$S:L^1(B',\bigwedge^{h}(\rn n))\to W^{s,q}(B,\bigwedge^{h}(\rn n))$ 
for every $s\in\N$, and a bounded operator $T:L^1(B',\bigwedge^{h}(\rn n))\to L^q(B,\bigwedge^{h-1}(\rn n))$ such that, for closed $L^1$-forms $\alpha$ on $B'$, 
\begin{equation}\label{homotopy formula for closed forms}
\alpha=d T\alpha+S\alpha\qquad \mbox{on $B$}.
\end{equation}
In particular, $S\alpha$ is closed.

Furthermore, $T$ and $S$ merely enlarge by a small amount the support of compactly supported differential forms.
\end{proposition}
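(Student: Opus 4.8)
\textbf{Proof plan for Proposition \ref{smoothing}.}
The plan is to build the operators $S$ and $T$ locally via a standard mollification-plus-homotopy device, working on the convex intermediate layers between $B$ and $B'$. First I would choose a cut-off function $\chi\in\mc D(B')$ with $\chi\equiv 1$ on a neighbourhood of $\overline B$, together with a slightly enlarged open set $B''$ with $B\Subset B''\Subset B'$ on which $\chi\equiv 1$. The point of departure is the global operator $K=d^*\Delta_c^{-1}$ of Lemma \ref{L1 boundedness of Kd new}, which is a kernel of type $1$ and therefore, by Lemma \ref{convolutions} and the lemma bounding $f\ast R(D)\Delta^{-1}_h$, maps $L^1(\rn n,\bigwedge^h)$ boundedly into $M^{n/(n-1)}(\rn n,\bigwedge^{h-1})\subset L^1_{\mathrm{loc}}$. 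Applying the homotopy identity $\chi=dK\chi+Kd\chi$ from Lemma \ref{L1 boundedness of Kd new} to the closed form $\alpha$ on $B''$, and using $d\alpha=0$, one gets (in the sense of currents on $B''$)
\begin{equation*}
\alpha=d\bigl(K(\chi\alpha)\bigr)+K(d\chi\wedge\alpha)\qquad\mbox{on }B''.
\end{equation*}
The term $d\chi\wedge\alpha$ is supported in $B'\setminus B''$, i.e.\ away from $\overline B$; since $K$ has a kernel that is smooth off the diagonal, $K(d\chi\wedge\alpha)$ is $C^\infty$ on a neighbourhood of $\overline B$, and by the decay estimates for $\psi\ast g$ (the lemma after Proposition \ref{kernel}) all its $W^{s,q}(B)$-norms are controlled by $\|\alpha\|_{L^1(B')}$. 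So a first candidate is $T_0\alpha:=K(\chi\alpha)|_B$ and $S_0\alpha:=K(d\chi\wedge\alpha)|_B$, already giving \eqref{homotopy formula for closed forms} with $S_0\alpha$ smooth; the remaining issue is only that $T_0\alpha$ lies a priori in $L^1_{\mathrm{loc}}$, not in $L^q(B)$, because the global $L^1\to L^{n/(n-1)}$ bound needs the vanishing-average hypothesis which $\chi\alpha$ need not satisfy.

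The mechanism to upgrade $L^1_{\mathrm{loc}}$ to $L^q$ on $B$ is to iterate the smoothing: the output $S_0\alpha$ is a closed \emph{smooth} form on a neighbourhood of $\overline B$, hence in $L^1\cap d^{-1}L^1$ with all derivatives, and one can feed it back into the construction on a slightly smaller convex layer, producing $S_0\alpha=dT_1(S_0\alpha)+S_1(S_0\alpha)$ with $T_1$ now genuinely mapping into $L^q$ because $S_0\alpha$ is bounded with compact control. More efficiently, since $B$ is convex one invokes the Iwaniec--Lutoborski / Poincaré homotopy for smooth forms on convex sets (or directly the Lanzani--Stein-type estimate of the displayed Proposition preceding this one) to write the closed smooth form $S_0\alpha$ on $B$ as $dR(S_0\alpha)$ with $\|R(S_0\alpha)\|_{L^q(B)}\le C\|S_0\alpha\|_{L^1(B)}\le C\|\alpha\|_{L^1(B')}$. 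Setting
\begin{equation*}
T\alpha:=\bigl(T_0\alpha\bigr)\big|_B\ \text{corrected},\qquad S\alpha:=S_1(S_0\alpha),
\end{equation*}
one arranges by a finite number of such steps on nested convex layers $B=B_N\Subset\cdots\Subset B_0=B''$ that the final remainder $S\alpha$ lies in $W^{s,q}(B)$ for the prescribed $s$, while the accumulated primitive $T\alpha$ lies in $L^q(B)$; closedness of $S\alpha$ is automatic from $dd=0$ applied to \eqref{homotopy formula for closed forms}.

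For the support statement, one observes that every operation used — convolution with $K$ (support grows only by the diameter of $\supp K\cap$ the relevant ball, which can be taken arbitrarily small by truncating $K$ as in the Remark after Lemma \ref{convolutions}, replacing $K$ by $\psi K$ with $\psi$ supported near $0$), multiplication by cut-offs, and the convex-domain homotopy (which is support-non-increasing in the sense that it keeps supports inside the convex hull) — enlarges supports by at most an arbitrarily prescribed $\delta>0$; choosing $\delta$ smaller than $\mathrm{dist}(B,\partial B')$ at each of the finitely many steps gives the claim. The main obstacle I expect is exactly the bookkeeping in the second paragraph: the naive $K(\chi\alpha)$ only lands in $L^1_{\mathrm{loc}}$ because $\chi\alpha$ fails the zero-average condition, and one must organize the iteration — deciding how many smoothing steps are needed to reach $W^{s,q}$, and checking that at each stage the error term is a \emph{closed} form smooth enough to re-enter the machine — so that the constants and the support losses stay finite; the $L^1\to L^q$ gain itself, for the remaining smooth closed pieces on convex sets, is supplied by the already-cited Lanzani--Stein estimate and is not the difficulty.
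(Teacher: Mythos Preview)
Your first paragraph is essentially the same setup as the paper's (the paper obtains the smooth remainder by truncating the kernel rather than by exploiting that $d\chi\wedge\alpha$ is supported away from $B$, but this is immaterial, and you do mention the truncation $\psi K$ at the end). You also correctly identify the obstacle: $T_0\alpha=K(\chi\alpha)$ lands only in $L^1_{\mathrm{loc}}$ because $\chi\alpha$ need not have vanishing averages, so Lanzani--Stein does not apply directly.

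The gap is in your second paragraph. Iterating the decomposition on $S_0\alpha$ does nothing to repair $T_0\alpha$. If you write $S_0\alpha=dT_1(S_0\alpha)+S_1(S_0\alpha)$ and substitute, you get
\[
\alpha=d\bigl(T_0\alpha+T_1(S_0\alpha)\bigr)+S_1(S_0\alpha),
\]
and no matter how many times you repeat this, the very first summand $T_0\alpha$ stays in the primitive and stays merely $L^1_{\mathrm{loc}}$. Likewise, writing $S_0\alpha=dR(S_0\alpha)$ via Iwaniec--Lutoborski (which in any case you cannot invoke here, since $B$ is not assumed convex in this proposition; convexity only enters in the subsequent corollary) would give $\alpha=d(T_0\alpha+R(S_0\alpha))$ with the same defect. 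Your iteration improves the wrong piece.

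The paper's fix is to iterate on the \emph{primitive}, not on the remainder. Set $\phi:=T_1\alpha=K_R(\chi\alpha)$; then $\phi\in L^1$ and $d\phi=\alpha-S_1\alpha\in L^1$ on $B_0$. Now cut off: with $\zeta\in\mc D(B_0)$, $\zeta\equiv 1$ near $B$, put $\omega:=d(\zeta\phi)$. This $\omega$ is closed, compactly supported, and lies in $L^1(\rn n)$; by the lemma $\int d\psi\wedge\xi=0$ it has vanishing averages, i.e.\ $\omega\in L^1_0\cap\ker d$. \emph{Now} Lanzani--Stein applies and gives $K\omega\in L^{q}$, hence $K_R\omega\in L^q(B)$ with norm $\le C\|\omega\|_{L^1}\le C\|\alpha\|_{L^1(B')}$. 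On $B$ one has $\alpha-S_1\alpha=d\phi=d(\zeta\phi)=\omega=dK_R\omega+S\omega$, so the final operators are $T\alpha:=K_R\omega|_B$ and $S\alpha:=S_1\alpha+S\omega$. The point you are missing is that the zero-average condition is \emph{manufactured} by passing to $\omega=d(\zeta\phi)$, an exact compactly supported form; this is where the $L^1\to L^q$ gain is actually obtained.
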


 \begin{proof}
{  Le us fix two open sets $B_0$ and $B_1$ with 
\begin{equation*}
B\Subset B_0 \Subset B_1\Subset B',
\end{equation*}
and a cut-off function $\chi \in \mc D(B_1)$,
 $\chi\equiv 1$ on  $B_0$. If $\alpha \in ( L^1 \cap d ^{-1})(B', \bigwedge^\bullet(\rn n))$, we set $\alpha_0=  \chi\alpha$, continued by zero outside $B_1$.
 Denote by $k$  the kernel associated with $K$  in Lemma \ref{L1 boundedness of Kd new}. 
 We consider a cut-off function $\psi_R$ supported in a $R$-neighborhood
of the origin, such that $\psi_R\equiv 1$ near the origin. Then we can write  
$k=k\psi_R + (1-\psi_R)k$. Thus, let us denote by $K_{R}$
 the convolution operator associated with $\psi_R k$.
By Lemma \ref{L1 boundedness of Kd new},
\begin{equation}\begin{split}\label{apr 20 eq:1}
\alpha_0 &=d  K\alpha_0+K_d  \alpha_0\\
&= d  K_{R} \alpha_0 + K_{R}d  \alpha_0 + S\alpha_0,
\end{split}\end{equation}
where $S_0$ is defined by
$$
S\alpha_0 :=  d ( (1-\psi_R)k\ast \alpha_0) + (1-\psi_R)k \ast d \alpha_0.
$$ 
 We set
$$
T_1\alpha := K_{R}\alpha_0, \qquad S_1\alpha:=  S\alpha_0.
$$

 If $\beta\in L^1(B_1,\bigwedge^h(\rn n))$, we set
$$
T_1 \beta:= K_{R}(\chi\beta)_{\big|_{B}}, \qquad S_1\alpha:=  {S\alpha_0}_{\big|_{B}}.
$$
We notice that, provided $R>0$ is small enough, the values of $T_1\beta$  do not depend on the continuation of $\beta$ outside $B_1$.
Moreover
$$
{K_{R} d \alpha_0}_{\big|_B} = K_{R} d  (\chi \alpha)_{\big|_B} = K_{R} (\chi d \alpha)_{\big|_B} =T_1(d \alpha),
$$
since $ d  (\chi \alpha)\equiv \chi d \alpha$ on $B_0$. 
Thus, by \eqref{apr 20 eq:1},
 $$
\alpha = d  T_1\alpha + T_1d  \alpha + S_1\alpha \qquad\mbox{in $B$}.
$$
Assume now that $d \alpha=0$. 
Then
 $$
\alpha = d  T_1\alpha +  S_1\alpha \qquad\mbox{in $B$}.
$$
Write $\phi=T_1\alpha\in L^1(B_0, \bigwedge^{h-1}(\rn n))$. By difference, $d \phi=\alpha-S_1\alpha\in L^1(B_0, \bigwedge^{h-1}(\rn n))$.

 }

The next step will consist of proving that $\phi\in L^q(B_0),  \bigwedge^{h-1}(\rn n)$,
``iterating'' the previous argument. Let us sketch how this iteration will work:
let $\zeta$ be a cut-off function supported in $B_0$, identically equal to $1$ in a neighborhood $\mc U$ of $B$, and set $\omega=d (\zeta\phi)$. 
Obviously, the form $\zeta\phi$ (and therefore also $\omega$) are defined on all $\rn n$ and are compactly
supported in $B_0$. In addition, $\omega$ is closed.
Suppose for a while we are able to prove that
\begin{itemize}
\item[a)]  $\omega \in L^1(\rn n, \bigwedge^{h}(\rn n))$;
\item[b)]  
$\| K_{0}\omega\|_{L^q(\rn n, \bigwedge^{h}(\rn n))} \le C\|\alpha\|_{L^1(B', \bigwedge^{h}(\rn n))}$,
\end{itemize}
and let us show how the argument can be carried out. 

First we stress that, if $R$ is small enough, then when $x\in B$, $K_{R}\omega(x)$
depends only on the restriction of $d \phi$ to $\mc U$, so that the map
$$
\alpha \to K_{R}\omega \big|_{B}
$$
is linear.

In addition, notice that $\omega=\chi\omega$, so that, by \eqref{apr 20 eq:1},
$$
d (\zeta\phi) = \omega = d K_{R}\omega + S\omega.
$$
Therefore in $B$
$$
\alpha-S_1\alpha = d \phi = d (\zeta\phi) = d K_{R}\omega + S_0\omega,
$$
and then in $B$
\begin{equation*}\begin{split}
\alpha &= d  (K_{R}\omega \big|_{B}) + S_1\alpha \big|_{B}+ S\omega \big|_{B}
\\&=: d (K_{R}(\chi\omega) \big|_{B}) + S\alpha = d T\alpha + S\alpha.
\end{split}\end{equation*}

First notice that the map $\alpha\to \omega=\omega(\alpha)$ is linear, and hence $T$ and
$S$ are linear maps. In addition, by b),
\begin{equation*}\begin{split}
\| T\alpha\|_{L^q(B,  \bigwedge^{h-1}(\rn n))} \le  \| K_{R}(\chi\omega)\|_{L^q(\rn n),  \bigwedge^{h}(\rn n)} =  \| K_{R}(\omega)\|_{L^q(\rn n,  \bigwedge^{h}(\rn n))}
\le C\, \|\alpha\|_{L^1(B',  \bigwedge^{h}(\rn n))}.
\end{split}\end{equation*}

As for the map $\alpha \to S\alpha$ we have just to point out that , when $x\in B$, $S\alpha(x)$
can be written as the convolution of $\alpha_0$ with a smooth kernel with bounded derivatives
of any order, and the proof is completed.

\end{proof}

Interior Poincar\'e and Sobolev inequalities follow now from the approximate homotopy formula for closed forms
\eqref{homotopy formula for closed forms}.

\begin{corollary}[Interior Poincar\'e and Sobolev inequalities]
Let $B\Subset B'$ open sets in $\rn n$, and assume $B$ is convex. For $h=1,\ldots,n-1$, let $q=n/(n-1)$. Then for every closed form 
$\alpha\in L^1(B', \bigwedge^{h}(\rn n))$, there exists an $(h-1)$-form $\phi\in L^q(B, \bigwedge^{h-1}(\rn n))$, such that 
$$
d \phi=\alpha_{|B}\qquad\mbox{and}\qquad \|\phi\|_{L^q(B,  \bigwedge^{h-1}(\rn n))}\leq C\,\|\alpha\|_{L^1(B',  \bigwedge^{h}(\rn n))}.
$$
Furthermore, if $\alpha$ is compactly supported, so is $\phi$.
\end{corollary}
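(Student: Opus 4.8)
The plan is to feed the approximate homotopy formula of Proposition \ref{smoothing} into a classical Poincar\'e lemma on the convex set $B$. Fix $h\in\{1,\dots,n-1\}$, put $q=n/(n-1)$, and let $\alpha\in L^1(B',\bigwedge^h(\rn n))$ be closed. Proposition \ref{smoothing} furnishes $T\alpha\in L^q(B,\bigwedge^{h-1}(\rn n))$ and a \emph{closed} form $S\alpha\in W^{s,q}(B,\bigwedge^h(\rn n))$ for all $s\in\N$ (in particular $S\alpha$ is smooth on $B$), with $\alpha=dT\alpha+S\alpha$ in $\mc D'(B,\bigwedge^h(\rn n))$ and $\|T\alpha\|_{L^q(B)}+\|S\alpha\|_{W^{s,q}(B)}\le C\|\alpha\|_{L^1(B')}$. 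Hence it suffices to produce $\eta\in L^q(B,\bigwedge^{h-1}(\rn n))$ with $d\eta=S\alpha$ on $B$ and $\|\eta\|_{L^q(B)}\le C\|S\alpha\|_{L^q(B)}$: then $\phi:=T\alpha+\eta$ satisfies $d\phi=dT\alpha+d\eta=(\alpha-S\alpha)+S\alpha=\alpha$ on $B$ and $\|\phi\|_{L^q(B)}\le\|T\alpha\|_{L^q(B)}+\|\eta\|_{L^q(B)}\le C\|\alpha\|_{L^1(B')}$.

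To obtain $\eta$ I would invoke the explicit homotopy operator $\mc T$ for the de Rham complex on a bounded convex domain built by Iwaniec \& Lutoborski \cite{IL}: it is bounded on $L^q(B)$ for every $q\in(1,\infty)$ and satisfies $\omega=d\mc T\omega+\mc T d\omega$; applied to the closed form $S\alpha$, and using $q=n/(n-1)>1$, it yields $\eta:=\mc T(S\alpha)$ with $d\eta=S\alpha$ and $\|\eta\|_{L^q(B)}\le C(n,B)\|S\alpha\|_{L^q(B)}$. (Alternatively one may use the Cartan cone-homotopy based at an interior point of $B$, estimated via the fractional-integration bound of Lemma \ref{convolutions}.) This is the only step that uses the convexity of $B$, and it explains why the inequality is elementary at this stage: all the $L^1$-specific analysis of the earlier sections is absorbed into the construction of $S$ and $T$, while on $B$ one needs only a classical $L^q$-Poincar\'e lemma with $q>1$.

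For the compact-support assertion, suppose $\alpha$ is compactly supported. By the last statement of Proposition \ref{smoothing}, $T\alpha$ and $S\alpha$ are then compactly supported; in particular $S\alpha$ is a closed, compactly supported $h$-form in $L^1(\rn n,\bigwedge^h(\rn n))$ with $\|S\alpha\|_{L^1(\rn n)}\le C\|\alpha\|_{L^1(B')}$ (boundedness of $S$ together with the fixed compact support of $S\alpha$), and $1\le h\le n-1$. For every constant-coefficient $(n-h)$-form $\xi$ we may write $\xi=d\tau$ with $\tau$ a polynomial-coefficient $(n-h-1)$-form, whence $d(S\alpha\wedge\tau)=(-1)^h\,S\alpha\wedge\xi$ and Stokes' theorem (licit because $S\alpha\wedge\tau$ is smooth with compact support) gives $\int_{\rn n}S\alpha\wedge\xi=0$. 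Thus $S\alpha\in L^1_0(\rn n,\bigwedge^h(\rn n))\cap\ker d$, so the Global Poincar\'e and Sobolev Theorem of the Introduction provides a \emph{compactly supported} $\eta\in L^q(\rn n,\bigwedge^{h-1}(\rn n))$ with $d\eta=S\alpha$ and $\|\eta\|_{L^q}\le C\|S\alpha\|_{L^1}\le C\|\alpha\|_{L^1(B')}$. Then $\phi:=T\alpha+\eta$ is compactly supported and solves $d\phi=\alpha$ on $B$ with the required bound, exactly as before.

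I do not expect any essential difficulty beyond Proposition \ref{smoothing} itself; the remaining work is bookkeeping — checking that the identities $dT\alpha=\alpha-S\alpha$ and $d\eta=S\alpha$ compose correctly in $\mc D'(B,\bigwedge^h(\rn n))$, and that the homotopy operator of \cite{IL} is legitimately applied to the $L^q$ (indeed smooth) closed form $S\alpha$. The single conceptual point worth isolating is that the global obstruction $\int\omega\wedge\beta=0$ (Proposition \ref{descends}) is satisfied automatically by the closed, compactly supported form $S\alpha$ precisely because $\deg S\alpha=h<n$; this is what lets the compactly supported case be closed off by the global theorem rather than by the homotopy operator on $B$, which would not preserve compact support.
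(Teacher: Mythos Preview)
Your argument is correct and follows the paper's own proof essentially verbatim: split $\alpha=dT\alpha+S\alpha$ via Proposition~\ref{smoothing}, invert $S\alpha$ on the convex set $B$ by the Iwaniec--Lutoborski homotopy, and add the two pieces. The only cosmetic difference is that the paper cites \cite[Proposition~4.1]{IL} to place the primitive of $S\alpha$ in $W^{1,q}(B)$, whereas you use only the $L^q$-boundedness of the same operator; either suffices here. Your treatment of the compact-support clause is more detailed than the paper's (which simply asserts it): you route through the global theorem after checking $S\alpha\in L^1_0\cap\ker d$, which is legitimate and a reasonable way to fill in what the paper leaves implicit.
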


\begin{proof}
By Proposition \ref{smoothing}, the $h$-form $S\alpha$ defined in \eqref{homotopy formula for closed forms} is closed and
belongs to  $L^{q}(B,\bigwedge^{h}(\rn n))$, with norm controlled by the $L^1$-norm of $\alpha$. Thus we can 
apply Iwaniec \& Lutoborski's  homotopy (\cite{IL}, Proposition 4.1) to obtain a differential $(h-1)$-form $\gamma$ on $B$
with norm in $W^{1,q}(B, \bigwedge^{h-1}(\rn n))$ controlled by the $L^q$-norm of $S\alpha$ and therefore from the
$L^1$-norm of $\alpha$. Set $\phi:= T\alpha + \gamma$. Clearly
$$
d\phi = dT\alpha + d\gamma =dT\alpha + S\alpha = \alpha.
$$
Then, by Proposition \ref{smoothing},
$$
\| \phi\|_{L^q(B, \bigwedge^{h-1}(\rn n))} \le C \big(\| \alpha \|_{L^1(B, \bigwedge^{h}(\rn n))} + \| S\alpha\|_{L^q(B, \bigwedge^{h}(\rn n))}\big)
\le C  \| \alpha \|_{L^1(B, \bigwedge^{h}(\rn n))}.
$$

\end{proof}

\section*{Acknowledgments}{A. B. and B. F. are supported by the University of Bologna, funds for selected research topics, and by MAnET Marie Curie
Initial Training Network, by 
GNAMPA of INdAM (Istituto Nazionale di Alta Matematica ``F. Severi''), Italy, and by PRIN of the MIUR, Italy.
\\
P.P. is supported by MAnET Marie Curie
Initial Training Network, by Agence Nationale de la Recherche, ANR-10-BLAN 116-01 GGAA and ANR-15-CE40-0018 SRGI. P.P. gratefully acknowledges the hospitality of Isaac Newton Institute, of EPSRC under grant EP/K032208/1, and of Simons Foundation.}

\bibliographystyle{amsplain}

\bigskip
\tiny{
\noindent
Annalisa Baldi and Bruno Franchi 
\par\noindent
Universit\`a di Bologna, Dipartimento
di Matematica\par\noindent Piazza di
Porta S.~Donato 5, 40126 Bologna, Italy.
\par\noindent
e-mail:
annalisa.baldi2@unibo.it, 
bruno.franchi@unibo.it.
}

\medskip

\tiny{
\noindent
Pierre Pansu 
\par\noindent Laboratoire de Math\'ematiques d'Orsay,
\par\noindent Universit\'e Paris-Sud, CNRS,
\par\noindent Universit\'e
Paris-Saclay, 91405 Orsay, France.
\par\noindent
e-mail: pierre.pansu@math.u-psud.fr
}

\end{document}